\documentclass[12pt, twoside, leqno]{article}
\usepackage{amsmath,amsthm}
\usepackage{amssymb,latexsym}
\usepackage{enumerate}

\newtheorem{thm}{Theorem}[section]
\newtheorem{prop}[thm]{Proposition}

\newtheorem{lem}[thm]{Lemma}
\newtheorem{prob}[thm]{Problem}

\theoremstyle{definition}

\numberwithin{equation}{section}

\usepackage{mathrsfs}
\usepackage{CJK}
\usepackage{amsmath}
\usepackage{fancyhdr}
\usepackage{enumerate}

\begin{document}

\baselineskip=17pt

\title{The classification of some polynomial maps in dimension three}
\author{Yuan He\\ MOE-LCSM,\\ School of Mathematics and Statistics,\\ Hunan Normal University, Changsha 410081, China \\ \emph{E-mail:} heyuan0424@163.com \\ Dan Yan\footnote{ The author is supported by the Scientific Research Fund of Hunan Provincial Education Department (Grant No. 25B0088), the NSF of China (Grant No. 12371020) and the Construct Program of the Key Discipline in Hunan Province.}\\
MOE-LCSM,\\ School of Mathematics and Statistics,\\
 Hunan Normal University, Changsha 410081, China \\
\emph{E-mail:} yan-dan-hi@163.com \\
}
\date{}

\maketitle

\renewcommand{\thefootnote}{}

\renewcommand{\thefootnote}{\arabic{footnote}}
\setcounter{footnote}{0}

%%%%%%%%
\begin{abstract} In the paper, we classify all
polynomial maps of the form $H=(u(x,y),\allowbreak v(x,y,z),h(x,y,z))$ in the
case that $JH$ is nilpotent and $\deg_zv\geq 2\deg_zh$. Then we give the structure of $H=(u(x,y),v(x,y,z),h(x,y,z))$ if $JH$ is nilpotent and $\deg_zh\leq 3$.
\end{abstract}
{\bf Keywords.} Jacobian Conjecture, Nilpotent Jacobian matrix, Polynomial maps\\
{\bf MSC(2020).} Primary 14E05;  Secondary 14A05;14R15 \vskip 2.5mm

\section{Introduction}

Throughout this paper, we will write $K$ for any field with
characteristic zero and $K[X]=K[x_1,x_2,\ldots,x_n]$ for the
polynomial algebra over $K$ with $n$ indeterminates. Let
$F=(F_1,F_2,\ldots,F_n):K^n\rightarrow K^n$ be a polynomial map,
that is, $F_i\in K[X]$ for all $1\leq i\leq n$. Let
$JF=(\frac{\partial F_i}{\partial x_j})_{n\times n}$ be the Jacobian
matrix of $F$. For a polynomial $h\in K[X]$, we abbreviate $\frac{\partial h}{\partial x_j}$ as $h_{x_j}$, and define $\deg_{x_i} f$ as the
highest degree of variable $x_i$ in $f$.
The Jacobian Conjecture (JC) raised by O.H. Keller in 1939 in
\cite{1} states that a polynomial map
$F: K^n\rightarrow K^n$ is invertible if the Jacobian
determinant $\det JF$ is a nonzero constant. This conjecture has
been attacked by many people from various research fields, but it is
still open, even for $n\geq 2$. Only the case $n=1$ is obvious. For
more information about the wonderful 70-year history, see \cite{2},
\cite{3}, and the references therein.

In 1980, S.S.S.Wang (\cite{4}) showed that the JC holds for all
polynomial maps of degree 2 in all dimensions (up to an affine
transformation). A powerful result is the reduction to degree
3, due to H.Bass, E.Connell and D.Wright (\cite{2}) in 1982 and
A.Yagzhev (\cite{5}) in 1980, which asserts that the JC is true if
it holds for all polynomial maps $X+H$, where $H$ is homogeneous
of degree 3. Thus, many authors studied these maps and this led to pose the
following problem.

 {\em (Homogeneous) Dependence Problem (DP for short).} Let $H=(H_1,\ldots,H_n): K^n\rightarrow K^n$ be a (homogeneous) polynomial map of degree $d$ such
that $JH$ is nilpotent and $H(0)=0$. Whether $H_1,\ldots,H_n$ are
linearly dependent over $K$?

The answer to the above problem is affirmative if rank$JH\leq 1$
(\cite{2}). In particular, this implies that the DP
has an affirmative answer in the case $n=2$. De Bondt and van den
Essen gave an affirmative answer to the above problem in the case that $H$
is homogeneous and $n=3$ (\cite{8}). For cubic homogeneous $H$, the case $n = 4$ has been solved affirmatively by Hubbers in \cite{7}, using techniques of \cite{6}. For cubic homogeneous $H$ with rank$JH = 2$, the DP has an affirmative answer for every
$n$. For quadratic $H$, the DP has an affirmative answer if
rank$\allowbreak JH \leq 2$ (see \cite{B2} or \cite[Theorem 3.4]{12}), in
particular if $n \leq 3$. For quadratic homogeneous $H$,
the DP has an affirmative answer in the case $n \leq 5$,
and several authors contributed to that result. See \cite[Appendix A]{HKM}
and \cite{XS5} for the case $n = 5$.

The first counterexamples to the DP were found by
van den Essen (\cite{9}, \cite[Theorem 7.1.7 (ii)]{3}). He
constructed counterexamples for all $n \geq 3$. In another paper
(\cite{E}), he constructed a quadratic counterexample for $n = 4$, which can be
generalized to arbitrary even degree (see \cite[Example 8.4.4]{3} for degree $4$).

 M. de Bondt was the
first who found homogeneous counterexamples (\cite{10}). He
constructed homogeneous counterexamples of degree $6$
for $n = 5$, homogeneous counterexamples of degree $4$ and $5$
for all $n \geq 6$, and cubic homogeneous counterexamples for all $n \geq 10$.
Homogeneous counterexamples of larger degrees can be made as well,
except for $n = 5$ and odd degrees. A cubic homogeneous counterexample
for $n = 9$ can be found in \cite{SFGZ}, see also \cite[Section 4.2]{HKM}.

In \cite{18}, Chamberland and van den Essen classified
all polynomial maps of the form
$$H=\big(u(x_1,x_2),v(x_1,x_2,x_3),h(u(x_1,x_2),v(x_1,x_2,x_3))\big)$$
with $JH$ nilpotent. The second author and Tang \cite{13} classified
all polynomial maps of the form
$H=\big(u(x_1,x_2),v(x_1,x_2,x_3),h(x_1,x_2,x_3)\big)$ with $JH$ nilpotent and $\deg_{x_3}h\leq 2$. In \cite{14}, the second author and de Bondt classified all polynomial maps
of the form
$$H=\big(H_1(x_1,x_2,\ldots,x_n),H_2(x_1,x_2),H_3(x_1,x_2,H_1),\ldots,H_n(x_1,x_2,H_1)\big)$$
with $JH$ nilpotent.
Casta\~{n}eda and van den Essen classified in \cite{CE} all polynomial maps
of the form
$$H=\big(u(x_1,x_2),u_2(x_1,x_2,x_3),u_3(x_1,x_2,x_4),\ldots,u_{n-1}(x_1,x_2,x_n),
h(x_1,x_2)\big)$$ with $JH$ nilpotent. The second author \cite{11} classified
all polynomial maps of the form
$H=\big(u(x_1,x_2,x_3),v(x_1,x_2,x_3),h(x_1,x_2)\big)$ with $JH$ nilpotent and $\deg_{x_3}v\leq 1$.

In the paper, we first classify all polynomial maps of the form
$H=(u(x,y),\allowbreak v(x,y,z), h(x,y,z))$ in the case that $JH$ is nilpotent and $\deg_zv\geq 3\deg_zh-1$ in section 2. Then, in section
3, we classify all polynomial maps of the form
$H=(u(x,y),\allowbreak v(x,y,z), h(x,y,z))$ in the case that $JH$ is nilpotent and $2\deg_zh\leq\deg_zv\leq 3\deg_zh-2$. In addition, we also give the structure of $H=(u(x,y),\allowbreak v(x,y,z), h(x,y,z))$ in the case that $JH$ is nilpotent and $\deg_zh\leq 3$. We also give some problems for further research in section 4.

Notation: For convenience, we use $x,y,z$ instead of $x_1,x_2,x_3$ in the paper. We view that the polynomials are in $K[x,y,z]$ with coefficients in $K[x,y]$ when we comparing the coefficients of the degree of $z$.

\section{The case of $\deg_zv\geq 3\deg_zh-1$}

In the section, we classify polynomial maps of the form
$H=(u(x,y),v(x,y,z),\allowbreak h(x,y,z))$ in the case that $JH$ is
nilpotent and $\deg_zv\geq 3\deg_zh-1$. We first prove several lemmas that we need.

\begin{lem} \label{lem2.1}
Let $H=(u(x,y),v(x,y,z),h(x,y,z))$ be a polynomial map of $K[x,y,z]$ with $u_y\neq 0$.
Suppose that $v=\sum_{i=0}^dv_iz^i$, $h=\sum_{j=0}^th_jz^j$ with $v_dh_t\neq 0$ and $v_i, h_j\in K[x,y]$ for all $1\leq i\leq d$, $1\leq j\leq t$. If $JH$ is nilpotent and $d\geq 2t$, then

$(1)$ $u_y,~v_d,~h_t\in K^*$, $v_{d-1}'\in K$ and $(v_{d+t-i})_x=\dfrac{(d+t-i+1)v_{d+t-i+1}v_{d-1}'}{dv_d}$ for $t+2\leq i\leq d-t+1$;

$(2)$ $(v_{2t-k})_x=-\dfrac{1}{u_{y}}\sum_{\gamma=1}^{k-1}(t-\gamma+1)(t-k+\gamma+1)h_{t-\gamma+1}h_{t-k+\gamma+1}+\\ \dfrac{(2t-k+1)v_{2t-k+1}v_{d-1}^{'}}{dv_{d}}$ for $2\leq k\leq t$; $(v_{2t-k})_x=-\dfrac{(2t-k+1)h_{2t-k+1}(2h_{1}+u_{x})}{u_{y}}-\dfrac{1}{u_{y}}\sum_{\gamma=k-t+1}^{t-1}(t-\gamma+1)(t-k+\gamma+1)h_{t-\gamma+1}h_{t-k+\gamma+1}+\dfrac{(2t-k+1)v_{2t-k+1}v_{d-1}^{'}}{dv_{d}}$
 for $t+1\leq k\leq 2t-1$; $v_{0x}=-\dfrac{u_{x}^{2}+h_{1}u_{x}+h_{1}^{2}}{u_{y}}+\dfrac{v_{1}v_{d-1}^{'}}{dv_{d}}$.
\end{lem}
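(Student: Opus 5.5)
Writing $H=(u,v,h)$, the Jacobian is
$$JH=\begin{pmatrix} u_x & u_y & 0\\ v_x & v_y & v_z\\ h_x & h_y & h_z\end{pmatrix}.$$
Nilpotency of a $3\times 3$ matrix is equivalent to the vanishing of its trace, of the sum of its principal $2\times 2$ minors, and of its determinant. This gives three polynomial identities in $K[x,y][z]$:
\begin{align*}
&u_x+v_y+h_z=0,\\
&u_xv_y-u_yv_x+u_xh_z+v_yh_z-v_zh_y=0,\\
&\det JH=0.
\end{align*}
The plan is to substitute $v=\sum v_iz^i$ and $h=\sum h_jz^j$ and compare coefficients of powers of $z$ from the top down, regarding everything as a polynomial in $z$ over $K[x,y]$ as the Notation paragraph prescribes.

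\textbf{Step 1: the trace identity.} Since $u$ does not depend on $z$, comparing the coefficient of $z^i$ in the trace identity gives
$$(v_i)_y+(i+1)h_{i+1}=0 \quad (i\ge 1),\qquad u_x+(v_0)_y+h_1=0.$$
For $i\ge t$ the first relation says $(v_i)_y=0$ for $i>t$ and $(v_t)_y=0$, so $v_i=v_i(x)$ for all $i\geq t$. Because $d\geq 2t$, this covers $v_d$ and $v_{d-1}$ (when $t\geq1$), so the prime in $v_{d-1}'$ means $d/dx$.

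\textbf{Step 2: the leading coefficients.} Use the trace to eliminate $v_y$, i.e.\ set $v_y=-u_x-h_z$. The second identity then becomes
$$-u_x^2-u_xh_z-h_z^2-u_yv_x-v_zh_y=0,$$
which gives
$$u_yv_x=-(u_x^2+u_xh_z+h_z^2)-v_zh_y. \qquad (*)$$
In $(*)$, $u_yv_x$ has $z$-degree $d$, the term $h_z^2$ has degree $2t-2<d$, and $v_zh_y$ has degree at most $d-1+t$. Comparing top degrees:
\begin{itemize}
\item The coefficient of $z^{d+t-1}$ gives $dv_d(h_t)_y=0$. Hence $h_t=h_t(x)$. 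Now $v_d$, $v_{d-1}$, $h_t$ are all functions of $x$, and $h_t$ is constant in $y$.
\item To get $u_y\in K^*$, $v_d\in K^*$, $h_t\in K^*$ and $v_{d-1}'\in K$, I will use the determinant identity. Expanding it gives $u_x(v_yh_z-v_zh_y)-u_y(v_xh_z-v_zh_x)=0$. Its top coefficients in $z$, combined with the relations already obtained, should give $u_y$ independent of $y$, so that $u=ay+b(x)$ once the relations on $h_t'$ and $v_d'$ are in hand.
\end{itemize}
I expect a chain of the following kind. The coefficient of $z^{d+t-1}$ in the determinant gives $u_y\,d\,v_d\,h_t'=u_y\,t\,h_t\,v_d'$. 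Degree considerations in $x$ (the inverse power trick: $u_y$ must divide expressions of the form $v_d$ and $h_t$, which depend on $x$ alone) then force $v_d,h_t\in K^*$. Finally the $z^{d+t-2}$ coefficients of $(*)$ and of the determinant force $u_y\in K^*$ and $v_{d-1}'\in K$. This part is the \emph{main obstacle}: getting all the constancy claims in $(1)$ requires carefully interleaving the two identities and using $u_y\neq0$. I would first try to derive $(h_{t-1})_y$ and $(v_{d-1})_x$ in terms of $v_d,h_t$, and then argue by degrees in $y$.

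\textbf{Step 3: the recursive formulas.} Once $h_t$ and $u_y$ are constants and $v_d,v_{d-1}$ depend only on $x$, compare the coefficient of $z^{m}$ in $(*)$ for each $m\le d$. Using $(h_j)_y=-(v_{j-1})_y/j$ from Step 1, this expresses $u_y(v_m)_x$ through the $h$-quadratic terms coming from $h_z^2$ and $u_xh_z$ and the cross terms from $v_zh_y$. The determinant identity then identifies the cross terms $v_zh_y$ at level $m$ with $\frac{(m+1)v_{m+1}v_{d-1}'}{dv_d}u_y$. I would establish this by a downward induction on $m$: the induction hypothesis shows that $h_y$ contributes only through $h_{t-1}$, with $(h_{t-1})_y$ proportional to $v_{d-1}'/(dv_d)$.

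The formulas then read off range by range:
\begin{itemize}
\item For $d+t-i$ with $t+2\le i\le d-t+1$, the $h_z^2$ term does not reach that degree, which gives part $(1)$.
\item For $2t-k$ with $2\le k\le t$, only the $h_z^2$ convolution contributes, giving the first formula in $(2)$.
\item For $t+1\le k\le 2t-1$, the term $u_xh_z$ and the $h_1$-boundary terms of the convolution also enter, producing the factor $(2h_1+u_x)$.
\item The constant coefficient gives the formula for $v_{0x}$.
\end{itemize}
These range-by-range verifications are routine bookkeeping once the inductive claim about $v_zh_y$ is established.
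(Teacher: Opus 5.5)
Your framework is the right one: the three nilpotency identities, top-down comparison of $z$-coefficients, and $(*)$, which is equivalent to the paper's equation (2.2). But Step 2, which carries all of part $(1)$, is not proved, and the route you sketch would not work.

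\textbf{Step 2.} The top coefficient of the determinant gives $dv_dh_t'=th_tv_d'$ with $u_y$ as a common factor that cancels, so there is no divisibility by $u_y$ to exploit. This relation only integrates to $v_d^t=\lambda h_t^d$, which non-constant pairs such as $v_d=x^d$, $h_t=x^t$ satisfy. Similarly, for $t\ge 3$ the $z^{d+t-2}$ coefficients give only $(h_{t-1})_y=0$ and a relation among $h_{t-1}',h_t',v_d',v_{d-1}'$ in which $u$ enters only through the cancelled factor $u_y$. They cannot yield $u_y\in K^*$ or $v_{d-1}''=0$.

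The missing idea is to make $u$ itself appear linearly with $K[x]$-coefficients:
\begin{enumerate}
\item The coefficients of $z^{d+t-1},\dots,z^{d+1}$ in $(*)$ give $h_2,\dots,h_t\in K[x]$.
\item The $z^d$ and $z^{d-1}$ coefficients of $(*)$ integrate in $y$ to $h_1=-\frac{v_d'}{dv_d}u+c_1(x)$ and $h_0=\bigl(\frac{(d-1)v_{d-1}v_d'}{(dv_d)^2}-\frac{v_{d-1}'}{dv_d}\bigr)u+c_2(x)$. Here $d\ge 2t$ is what keeps $u_x^2+u_xh_z+h_z^2$, of $z$-degree at most $2t-2<d-1$, out of the way.
\item Substituting these into the $z^d$ and $z^{d-1}$ coefficients of the determinant identity turns each into $A(x)u+B(x)=0$ with $A,B\in K[x]$. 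Since $\deg_y u\ge 1$, this forces $A=0$.
\item From the $z^d$ coefficient this gives $(d+1)(v_d')^2=dv_dv_d''$, which is impossible when $\deg v_d\ge 1$ (compare leading terms), so $v_d\in K^*$. The $z^{d-1}$ coefficient then gives $v_{d-1}''=0$.
\item $h_t\in K^*$ now follows from $v_d^t=\lambda h_t^d$.
\item $u_y\in K^*$ comes from the $z^{2t-2}$ coefficient of $(*)$, which reads $u_y\bigl(v_{2t-2}'-\frac{(2t-1)v_{2t-1}v_{d-1}'}{dv_d}\bigr)=-t^2h_t^2\in K^*$.
\end{enumerate}

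\textbf{Step 3.} You put the $y$-dependence of $h$ in the wrong place. Once $v_d'=0$ we have $h_1=c_1(x)$, so $h_1,\dots,h_t\in K[x]$. All of $h_y$ then sits in $h_0$: $h_y=h_{0y}=-\frac{v_{d-1}'}{dv_d}u_y$. Your claim about $h_{t-1}$ is correct only in the degenerate case $t=1$, where $h_{t-1}=h_0$.

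This is exactly why the $z^m$ coefficient of $v_zh_y$ is $(m+1)v_{m+1}h_{0y}$, producing the term $\frac{(m+1)v_{m+1}v_{d-1}'}{dv_d}$. If the $y$-dependence sat in $h_{t-1}$, the cross term would pair with $v_{m-t+2}$ and the indices would come out wrong. The identification comes from $(*)$ itself, via its $z^{d-1}$ coefficient; the determinant is used only to kill $v_d'$ and $v_{d-1}''$.

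No induction is needed. With $h_{0y}$ known, each formula in $(1)$ and $(2)$ is read off from the single coefficient of $z^m$ in $(*)$. Your range-by-range accounting of the contributions of $u_x^2+u_xh_z+h_z^2$ is then correct, and matches what the paper does.
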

\begin{proof}
Since $JH$ is nilpotent, we have
\begin{eqnarray}
-u_{x}-v_{y}=h_{z},\label{eq2.1}\\
-u_{x}v_{y}-v_{x}u_{y}-u_{x}^{2}-v_{y}^{2}=h_{y}v_{z},\label{eq2.2}\\
-u_{x}^{3}-2u_{x}u_{y}v_{x}-v_{x}v_{y}u_{y}=u_{y}v_{z}h_{x}.\label{eq2.3}
\end{eqnarray}
It follows from equation \eqref{eq2.1} that
\begin{equation}\label{eq2.4}
-u_{x}-(v_{dy}z^{d}+\cdots+v_{1y}z+v_{0y})=th_{t}z^{t-1}+\cdots+2h_{2}z+h_{1}.
\end{equation}
Then we have $v_t,v_{t+1},\ldots,v_d\in K[x]$ and
\begin{eqnarray}
(v_{t-1})_y=-th_{t},\notag\\
	(v_{t-2})_y=-(t-1)h_{t-1},\notag\\
	\vdots~~~~~~~~~~\label{eq2.5}\\
	v_{1y}=-2h_{2},\notag\\
	-u_{x}-v_{0y}=h_{1}\notag
\end{eqnarray}
by comparing the coefficients of the degree of $z$ of equation \eqref{eq2.4}. It follows from equations \eqref{eq2.2}, \eqref{eq2.5} that
\begin{equation}\label{eq2.6}
\begin{split}
-u_{x}((v_{t-1})_yz^{t-1}+\cdots+v_{0y})-u_{y}(v_{dx}z^{d}+\cdots+v_{0x})-u_{x}^{2}-\\( (v_{t-1})_yz^{t-1}+\cdots+v_{0y}) ^{2}
=(h_{ty}z^{t}+\cdots+h_{0y})( dv_{d}z^{d-1}+\cdots+v_{1}).
\end{split}
\end{equation}
Since $d\geq 2t>2t-2$, we have
\begin{equation}\label{eq2.7}
h_t,h_{t-1},\ldots,h_2\in K[x]
\end{equation}
by comparing the coefficients of $z^{d+t-1}, z^{d+t-2},\ldots,z^{d+1}$ of equation \eqref{eq2.6}, respectively. Comparing the coefficients of $z^d$ of equation \eqref{eq2.6}, we have
\begin{equation}\label{eq2.8}
h_{1y}=-\frac{v_d'}{dv_d}u_y.
\end{equation}
Then we have
\begin{equation}\label{eq2.9}
h_1=-\frac{v_d'}{dv_d}u+c_1(x)
\end{equation}
with $c_1(x)\in K[x]$ by integrating both sides of equation \eqref{eq2.6} with respect to $y$. Consider the coefficients of $z^{d-1}$ of equation \eqref{eq2.6} and substitute equation \eqref{eq2.8} to it, we have
\begin{equation}\label{eq2.10}
h_{0y}=\left( \frac{(d-1)v_{d-1}v_{d}^{'}}{(dv_{d})^{2}}-\frac{v_{d-1}^{'}}{dv_{d}}\right) u_{y}.
\end{equation}
Then we have
\begin{equation}\label{eq2.11}
h_{0}=\left( \frac{(d-1)v_{d-1}v_{d}^{'}}{(dv_{d})^{2}}-\frac{v_{d-1}^{'}}{dv_{d}}\right) u+c_{2}(x)
\end{equation}
with $c_2(x)\in K[x]$ by integrating both sides of equation \eqref{eq2.10} with respect to $y$.\\

It follows from equations \eqref{eq2.3}, \eqref{eq2.5} that
\begin{equation}\label{eq2.12}
\begin{split}
-u_{x}^{3}-2u_{x}u_{y}(v_{dx}z^{d}+\cdots+v_{0x})-u_{y}(v_{dx}z^{d}+\cdots+v_{0x})( (v_{t-1})_yz^{t-1}+\\\cdots+v_{0y})
	=u_{y}(h_{tx}z^{t}+\cdots+h_{0x})(dv_{d}z^{d-1}+\cdots+v_{1}).
\end{split}
\end{equation}
Then we have
$$dv_dh_{tx}=th_tv_{dx}$$
by comparing the coefficients of $z^{d+t-1}$ of equation \eqref{eq2.12}. Thus, we have
\begin{equation}\label{eq2.13}
v_d^t=\lambda h_t^d
\end{equation}
with $\lambda\in K^*$ by integrating both sides of the above equation with respect to $x$.
Comparing the coefficients of $z^d$ of equation \eqref{eq2.12}, we have
\begin{equation}
\begin{split}
dv_{d}h_{1x}+(d-1)v_{d-1}h_{2x}+\cdots+(d-t+1)v_{d-t+1}h_{tx}\notag\\=-2u_{x}v_{dx}-v_{dx}v_{0y}-(v_{d-1})_xv_{1y}-\cdots-(v_{d-t+1})_x(v_{t-1})_y.
\end{split}
\end{equation}
Substituting equations \eqref{eq2.5}, \eqref{eq2.9} to the above equation, we have
\begin{equation}
\begin{split}
\frac{(d+1)(v_{d}^{'})^{2}-dv_{d}v_{d}^{''}}{dv_{d}}u+dv_{d}c_{1}^{'}(x)+(d-1)v_{d-1}h_{2}^{'}+\cdots\\+(d-t+1)v_{d-t+1}h_{t}^{'}\notag=v_{d}^{'}c_{1}(x)+2h_{2}v_{d-1}^{'}+\cdots+th_{t}v_{d-t+1}^{'}.
\end{split}
\end{equation}
Then we have $v_{d}^{'}=0$ by comparing the coefficients of $y$ of the above equation. Thus, we have $v_d\in K^*$. It follows from equation \eqref{eq2.13} that $h_t\in K^*$. Then the above equation has the following form:
\begin{equation}\label{eq2.14}
\begin{split}
dv_{d}c_{1}^{'}(x)+(d-1)v_{d-1}h_{2}^{'}+\cdots+(d-t+2)v_{d-t+2}h_{t-1}^{'}\\=2h_{2}v_{d-1}^{'}+3h_3v_{d-2}^{'}+\cdots+th_{t}v_{d-t+1}^{'}.
\end{split}
\end{equation}
Comparing the coefficients of $z^{d-1}$ of equation \eqref{eq2.12}, we have
\begin{equation}
\begin{split}
dv_{d}h_{0x}+(d-1)v_{d-1}h_{1x}+\cdots+(d-t+1)v_{d-t+1}(h_{t-1})_x\notag\\=-2u_{x}(v_{d-1})_x-(v_{d-1})_xv_{0y}-(v_{d-2})_xv_{1y}-\cdots-(v_{d-t})_x(v_{t-1})_y.
\end{split}
\end{equation}
Substituting equations \eqref{eq2.5}, \eqref{eq2.9}, \eqref{eq2.11} to the above equation, we have
\begin{equation}
\begin{split}
-v_{d-1}^{''}u+dv_{d}c_{2}^{'}(x)+(d-1)v_{d-1}c_{1}^{'}(x)+\cdots+(d-t+1)v_{d-t+1}h_{t-1}^{'}\notag\\=c_{1}(x)v_{d-1}^{'}+2h_{2}v_{d-2}^{'}+\cdots+th_{t}v_{d-t}^{'}.
\end{split}
\end{equation}
Then we have $v_{d-1}^{''}=0$ by comparing the coefficients of $y$ of the above equation. Thus, we have $v_{d-1}^{'}\in K$. Then the above equation has the following form:
\begin{equation}\label{eq2.15}
\begin{split}
dv_{d}c_{2}^{'}(x)+(d-1)v_{d-1}c_{1}^{'}(x)+\cdots+(d-t+1)v_{d-t+1}h_{t-1}^{'}\\=c_{1}(x)v_{d-1}^{'}+2h_{2}v_{d-2}^{'}+\cdots+th_{t}v_{d-t}^{'}.
\end{split}
\end{equation}\\

Since $v_d,~h_t\in K^*$, it follows from equations \eqref{eq2.9}, \eqref{eq2.11} that
\begin{eqnarray}
h_{1}=c_{1}(x) \in K[x],\label{eq2.16}\\
h_{0}= -\frac{v_{d-1}^{'}}{dv_{d}} u+c_{2}(x),\,c_{2}(x)\in K[x].\label{eq2.17}
\end{eqnarray}
Consider the coefficients of $z^{d+t-i}$$(t+2\leq i\leq d-t+1)$ of equation \eqref{eq2.6} and substitute equations \eqref{eq2.7}, \eqref{eq2.16} to it, we have
$$(d+t-i+1)v_{d+t-i+1}h_{0y}=-u_{y}(v_{d+t-i})_x.$$
Substituting equation \eqref{eq2.17} to the above equation, we have
\begin{equation}\label{eq2.18}
(v_{d+t-i})_x=\dfrac{(d+t-i+1)v_{d+t-i+1}v_{d-1}^{'}}{dv_{d}},\;(t+2\leq i\leq d-t+1).
\end{equation}
Comparing the coefficients of $z^{2t-2}$ of equation \eqref{eq2.6}, we have
$$(2t-1)v_{2t-1}h_{0y}=-u_y(v_{2t-2})_x-((v_{t-1})_y)^2.$$
Substituting equations \eqref{eq2.5}, \eqref{eq2.17} to the above equation, we have
$$\left( v_{2t-2}^{'}-\frac{(2t-1)v_{2t-1}v_{d-1}^{'}}{dv_{d}}\right)u_{y}=-t^{2}h_{t}^{2}.$$
Since $th_t\in K^*$, we have $u_y\in K^*$ and $v_{2t-2}^{'}=-\frac{t^{2}h_{t}^{2}}{u_{y}}+ \frac{\left( 2t-1\right) v_{2t-1}v_{d-1}^{'}}{dv_{d}}$. Consider the coefficients of $z^{2t-k}$$(3\leq k\leq t)$ of equation \eqref{eq2.6} and substitute equation \eqref{eq2.17} to it, we have
$$\left( \frac{\left( 2t-k+1\right) v_{2t-k+1}v_{d-1}^{'}}{dv_{d}}-v_{2t-k}^{'}\right) u_{y}=\sum_{\gamma=1}^{k-1}(v_{t-\gamma})_y(v_{t-k+\gamma})_y.$$
Since $u_y\in K^*$, it follows from the above equation and equation \eqref{eq2.5} that
\begin{equation}\label{eq2.19}
\begin{split}
v_{2t-k}^{'}=-\dfrac{1}{u_{y}}\sum_{\gamma=1}^{k-1}(t-\gamma+1)(t-k+\gamma+1)h_{t-\gamma+1}h_{t-k+\gamma+1}\\+\dfrac{(2t-k+1)v_{2t-k+1}v_{d-1}^{'}}{dv_{d}}
\end{split}
\end{equation}
for $2\leq k\leq t$. Consider the coefficients of $z^{2t-k}$$(t+1\leq k\leq 2t-1)$ of equation \eqref{eq2.6} and substitute equation \eqref{eq2.17} to it, we have
\begin{equation}\nonumber
\begin{split}
\left(\frac{(2t-k+1) v_{2t-k+1}v_{d-1}^{'}}{dv_{d}}-(v_{2t-k})_x\right) u_{y}=u_{x}(v_{2t-k})_y+2(v_{2t-k})_yv_{0y}\\+\sum_{\gamma=k-t+1}^{t-1}(v_{t-\gamma})_y(v_{t-k+\gamma})_y.
\end{split}
\end{equation}
Since $u_y\in K^*$, it follows from equation \eqref{eq2.5} that
\begin{equation}\label{eq2.20}
\begin{split}
(v_{2t-k})_x=-\dfrac{(2t-k+1)h_{2t-k+1}(2h_{1}+u_{x})}{u_{y}}+\dfrac{(2t-k+1)v_{2t-k+1}v_{d-1}^{'}}{dv_{d}}\\-\dfrac{1}{u_{y}}\sum_{\gamma=k-t+1}^{t-1}(t-\gamma+1)(t-k+\gamma+1)h_{t-\gamma+1}h_{t-k+\gamma+1}.
\end{split}
\end{equation}
Comparing the coefficients of $z^0$ of equation \eqref{eq2.6}, we have
$$v_{1}h_{0y}=-u_{y}v_{0x}-u_{x}v_{0y}-u_{x}^{2}-v_{0y}^{2}.$$
Substituting equations \eqref{eq2.5}, \eqref{eq2.17} to the above equation, we have
$$v_{0x}=-\dfrac{u_{x}^{2}+h_{1}u_{x}+h_{1}^{2}}{u_{y}}+\dfrac{v_{1}v_{d-1}^{'}}{dv_{d}}.$$
Then the conclusion follows.
\end{proof}

\begin{lem}\label{lem2.2}
Let $H=(u(x,y),v(x,y,z),h(x,y,z))$ be a polynomial map of $K[x,y,z]$ with $u_y\neq 0$.
Suppose that $v=\sum_{i=0}^dv_iz^i$, $h=\sum_{j=0}^th_jz^j$ with $v_dh_t\neq 0$ and $v_i, h_j\in K[x,y]$ for all $1\leq i\leq d$, $1\leq j\leq t$. If $JH$ is nilpotent and $d\geq 3t-1$, then
$c_{2}^{'}(x)=\dfrac{c_{1}(x)v_{d-1}^{'}}{dv_{d}},\,c_{1}^{'}(x)=\dfrac{2h_{2}v_{d-1}^{'}}{dv_{d}},\;h_{t-\alpha+1}^{'}=\dfrac{(t-\alpha+2)h_{t-\alpha+2}v_{d-1}^{'}}{dv_{d}}$
with $2\leq \alpha\leq t-1$.
\end{lem}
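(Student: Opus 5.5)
Our plan is to keep the setup of Lemma \ref{lem2.1}, which applies since $d\geq 3t-1\geq 2t$. Thus we may use $u_y,v_d,h_t\in K^*$, $v_{d-1}'\in K$, and $h_2,\dots,h_t\in K[x]$. We also use $h_1=c_1(x)$ and $h_0=-\frac{v_{d-1}'}{dv_d}u+c_2(x)$ from \eqref{eq2.16}, \eqref{eq2.17}, together with the relations \eqref{eq2.5}: $(v_{j-1})_y=-jh_j$ for $2\le j\le t$ and $v_{0y}=-u_x-h_1$. The remaining information lies in equation \eqref{eq2.12}, and so far we have only exploited its coefficients of $z^{d+t-1}$, $z^d$ and $z^{d-1}$. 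We now compare the coefficients of $z^{d-1+j}$ for $j=t-1,t-2,\dots,1$, and also reuse \eqref{eq2.14} and \eqref{eq2.15}, which are the $z^d$ and $z^{d-1}$ coefficients.

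The main step is to show that, because $d\ge 3t-1$, these top coefficients involve only $v_{d},\dots,v_{d-t+1}$ (and $v_{d-t}$) together with $h$'s. Moreover, these $v$'s have their $x$-derivatives determined in a simple way by \eqref{eq2.18}: $(v_{m})_x=\frac{(m+1)v_{m+1}v_{d-1}'}{dv_d}$ whenever $2t-1\le m\le d-1$. The index range in \eqref{eq2.18} is exactly $2t-1\le d+t-i\le d-2$, and the case $m=d-1$ is trivial. The point of the hypothesis $d\ge 3t-1$ is that $d-t\ge 2t-1$, so every $v_m$ with $d-t\le m\le d$ lies in the range covered by \eqref{eq2.18}. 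Hence each $v_m$ there is in $K[x]$ (indeed $v_t,\dots,v_d\in K[x]$). Also $(v_m)_y=0$ for $m\ge t$, so on the right side of \eqref{eq2.12} the products $v_{mx}(v_{k})_y$ with $k\le t-1$ are the only surviving terms.

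Concretely, the coefficient of $z^{d-1+j}$ in \eqref{eq2.12}, after dividing by $u_y$, reads
\[\sum_{k}(d-k+j)\,v_{d-k+j}\,h_{kx}\;=\;-\sum_{k}(v_{d-k+j})_x(v_{k-1})_y,\]
with an extra $-2u_x v_{(d-1+j)x}$ term that vanishes for $j\ge 1$, since $v_{d}'=0$ and the corresponding terms cancel via $v_{0y}=-u_x-h_1$. Substituting $(v_{k-1})_y=-kh_k$ and $(v_{m})_x=\frac{(m+1)v_{m+1}v_{d-1}'}{dv_d}$ gives a triangular system. The top equation ($j=t-1$) involves only $dv_dh_{t-1}'$ and $h_t$, and yields $h_{t-1}'=\frac{t h_t v_{d-1}'}{dv_d}$. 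Proceeding downward, the equation for $z^{d-1+j}$ contains $dv_dh_{j}'$ plus terms $(d-k+j)v_{d-k+j}h_k'$ with $k>j$. By induction these are already expressed as $\frac{(k+1)h_{k+1}v_{d-1}'}{dv_d}$, and they should telescope against the right-hand terms $k h_k\frac{(d-k+j+1)v_{d-k+j+1}v_{d-1}'}{dv_d}$ after an index shift $k\mapsto k+1$. What remains is $h_j'=\frac{(j+1)h_{j+1}v_{d-1}'}{dv_d}$, which for $j=t-\alpha+1$ is the claimed formula. This covers $2\le\alpha\le t-1$; for $j=1$ it is exactly \eqref{eq2.14}, giving $c_1'=\frac{2h_2v_{d-1}'}{dv_d}$. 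Finally, \eqref{eq2.15} with the same telescoping gives $c_2'=\frac{c_1v_{d-1}'}{dv_d}$.

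The expected obstacle is bookkeeping the telescoping cancellation. We need to verify that, after substituting the inductive hypotheses, the left sum $\sum_{k>j}(d-k+j)v_{d-k+j}(k+1)h_{k+1}$ matches the right sum $\sum_{k>j}k h_k(d-k+j+1)v_{d-k+j+1}$ term by term under $k\mapsto k+1$, with only the boundary term producing $h_j'$. We also need to check carefully that the indices $d-k+j$ never drop below $2t-1$; this is where $d\ge 3t-1$ is used.
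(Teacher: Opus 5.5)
Your proposal is correct and follows essentially the same route as the paper. Comparing the coefficients of $z^{d-1+j}$ in \eqref{eq2.12} for $j=t-1,\dots,2$ is exactly the paper's induction on $\alpha$ via the $z^{d+t-\alpha}$ coefficients (with $j=t-\alpha+1$), using \eqref{eq2.5} and \eqref{eq2.18}, and the cases $j=1$ and $j=0$ are \eqref{eq2.14} and \eqref{eq2.15}, which give $c_1'$ and $c_2'$. Your explicit telescoping check is the computation the paper leaves implicit in passing from \eqref{eq2.26} to \eqref{eq2.28}, and you correctly place the sharpest use of $d\ge 3t-1$ at $d-t\ge 2t-1$ in the $c_2'$ step.
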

\begin{proof}
Since $JH$ is nilpotent, we have
\begin{eqnarray}
-u_{x}-v_{y}=h_{z},\label{eq2.21}\\
-u_{x}v_{y}-v_{x}u_{y}-u_{x}^{2}-v_{y}^{2}=h_{y}v_{z},\label{eq2.22}\\
-u_{x}^{3}-2u_{x}u_{y}v_{x}-v_{x}v_{y}u_{y}=u_{y}v_{z}h_{x}.\label{eq2.23}
\end{eqnarray}
Next we prove the conclusion by induction on $\alpha$. It follows from Lemma \ref{lem2.1} that $v_d,~h_t\in K^*$. If $\alpha=2$, then we have
$$dv_{d}h_{t-1}^{'}=-v_{d-1}^{'}(v_{t-1})_y$$
by comparing the coefficients of $z^{d+t-2}$ of equation \eqref{eq2.12}. Substituting equation \eqref{eq2.5} to the above equation, we have
\begin{equation}\label{eq2.24}
h_{t-1}^{'}=\frac{th_{t}v_{d-1}^{'}}{dv_{d}}
\end{equation}
Suppose that
\begin{equation}\label{eq2.25}
h_{t-\kappa+1}^{'}=\dfrac{(t-\kappa+2)h_{t-\kappa+2}v_{d-1}^{'}}{dv_{d}}
\end{equation}
for $2\leq \kappa\leq \alpha-1$. We have
\begin{equation}\nonumber
\begin{split}
dv_{d}(h_{t-\alpha+1})_x+(d-1)v_{d-1}(h_{t-\alpha+2})_x+\cdots+(d-\alpha+2)v_{d-\alpha+2}(h_{t-1})_x\\=-(v_{d-1})_x(v_{t-\alpha+1})_y-(v_{d-2})_x(v_{t-\alpha+2})_y-\cdots-(v_{d-\alpha+1})_x(v_{t-1})_y
\end{split}
\end{equation}
by comparing the coefficients of $z^{d+t-\alpha}$$(2\leq\alpha\leq t-1)$ of equation \eqref{eq2.12}.
Substituting equation \eqref{eq2.5} to the above equation, we have
\begin{equation}\label{eq2.26}
\begin{split}
dv_{d}h_{t-\alpha+1}^{'}+(d-1)v_{d-1}h_{t-\alpha+2}^{'}+\cdots+(d-\alpha+2)v_{d-\alpha+2}h_{t-1}^{'}\\=(t-\alpha+2)h_{t-\alpha+2}(v_{d-1})_x+(t-\alpha+3)h_{t-\alpha+3}(v_{d-2})_x+\cdots+th_{t}(v_{d-\alpha+1})_x.
\end{split}
\end{equation}
Since $d\geq 3t-1$ and $2\leq \alpha\leq t-1$, we have $d-\alpha+1\geq 2t+1>2t-2$. It follows from Lemma \ref{lem2.1} that
\begin{equation}\label{eq2.27}
(v_{d-\alpha+1})_x=\dfrac{(d-\alpha+2)v_{d-\alpha+2}v_{d-1}^{'}}{dv_{d}}
\end{equation}
for $2\leq \alpha\leq t-1$. Substituting equations \eqref{eq2.25}, \eqref{eq2.27} to equation \eqref{eq2.26}, we have
\begin{equation}\label{eq2.28}
h_{t-\alpha+1}^{'}=\dfrac{(t-\alpha+2)h_{t-\alpha+2}v_{d-1}^{'}}{dv_{d}}
\end{equation}
for $2\leq \alpha\leq t-1$. It follows from equation \eqref{eq2.14} that
\begin{equation}\nonumber
\begin{split}
dv_{d}c_{1}^{'}(x)+(d-1)v_{d-1}h_{2}^{'}+\cdots+(d-t+2)v_{d-t+2}h_{t-1}^{'}\\=2h_{2}v_{d-1}^{'}+3h_{3}v_{d-2}^{'}+\cdots+th_{t}v_{d-t+1}^{'}.
\end{split}
\end{equation}
Since $d\geq 3t-1$, we have $d-t+1\geq 2t$. It follows from Lemma \ref{lem2.1} that $v_{d-t+1}^{'}=\frac{(d-t+2)v_{d-t+2}v_{d-1}^{'}}{dv_{d}}$. Substituting equations \eqref{eq2.27}, \eqref{eq2.28} to the above equation, we have
\begin{equation}\label{eq2.29}
c_{1}^{'}(x)=\dfrac{2h_{2}v_{d-1}^{'}}{dv_{d}}.
\end{equation}
It follows from equation \eqref{eq2.15} that
\begin{equation}\nonumber
\begin{split}
dv_{d}c_{2}^{'}(x)+(d-1)v_{d-1}c_{1}^{'}(x)+\cdots+(d-t+1)v_{d-t+1}h_{t-1}^{'}\\=c_{1}(x)v_{d-1}^{'}+2h_{2}v_{d-2}^{'}+\cdots+th_{t}v_{d-t}^{'}.
\end{split}
\end{equation}
Since $d\geq 3t-1$, we have $d-t\geq 2t-1$. It follows from Lemma \ref{lem2.1} that $v_{d-t}^{'}=\frac{(d-t+1)v_{d-t+1}v_{d-1}^{'}}{dv_{d}}$. Substituting equations \eqref{eq2.27}, \eqref{eq2.28}, \eqref{eq2.29} to the above equation, we have
$$c_{2}^{'}(x)=\dfrac{c_{1}(x)v_{d-1}^{'}}{dv_{d}}.$$
Then the conclusion follows.
\end{proof}

\begin{thm}\label{thm2.3}
Let $H=(u(x,y),v(x,y,z),h(x,y,z))$ be a polynomial map of $K[x,y,z]$ with $H(0)= 0$. Suppose that $u,~v,~h$ are linearly independent over $K$. If $JH$ is nilpotent and $\deg_zv\geq 3\deg_zh-1$, then $u=g(ay+b(x))$, $v=v_1z-a^{-1}b'(x)g(ay+b(x))-v_1l_2x$ and $h=c_0u^2+l_2u$ with $b(x)=v_1c_0ax^2+l_1x+\tilde{l}_2$, $v_1,~c_0,~a\in K^*$, $l_1,~l_2,~\tilde{l}_2\in K$, $g(t)\in K[t]$ and $g(0)=0$, $\deg g(t)\geq 1$.
\end{thm}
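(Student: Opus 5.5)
We first dispose of the degenerate cases and then specialize Lemmas \ref{lem2.1} and \ref{lem2.2} as far as they go. Write $d=\deg_zv$ and $t=\deg_zh$.

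\emph{Degenerate cases.} If $u_y=0$, then $u\in K[x]$, and nilpotency of $JH$ gives $u_x$ nilpotent, so $u\in K$. Since $H(0)=0$, this forces $u=0$, contradicting linear independence. Hence $u_y\neq 0$. If $t=0$, then $h\in K[x,y]$ and the structure degenerates; we expect to show directly that this contradicts independence or reduces to the case below. If $v_z=0$, then $H$ is essentially two-dimensional, and by \cite{2} the components are dependent. So $d\geq 1$ and $t\geq 1$, and since $d\geq 3t-1\geq 2t$, both lemmas apply.

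\emph{Step 1: show $t=1$.} Lemma \ref{lem2.1} gives $u_y,v_d,h_t\in K^*$ and $v_{d-1}'\in K$. Lemma \ref{lem2.2}, applied inductively from the top, shows that $h_{t-1},\dots,h_2$ and $c_1=h_1$ are polynomials in $x$ whose derivatives are proportional to $v_{d-1}'$. Now suppose $t\geq 2$. Comparing the coefficient of $z^{2t-2}$ in \eqref{eq2.6} gives
\[
v_{2t-2}'=-t^2h_t^2/u_y+(\text{const})\cdot v_{2t-1}.
\]
On the other hand, for $d\geq 3t-1$ the formula in Lemma \ref{lem2.1}(1) also covers $v_{2t-1}$ and the higher coefficients, and these make every $v_j$ with $j\geq 2t-1$ a polynomial in $x$ of controlled degree. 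Feeding this into the coefficient of $z^{t-1}$ in \eqref{eq2.12} and comparing $y$-degrees (recall $u_y\in K^*$, so $u=ay+b(x)$ up to a change of variable) should give a contradiction unless $t=1$. I expect this to be the main obstacle: one has to find the right coefficient of \eqref{eq2.12} in which the term $t^2h_t^2$ survives with nothing to cancel it. The first attempt will be the lowest coefficient, $z^0$, where the $u_x^3$ term appears.

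\emph{Step 2: $t=1$ forces $d=1$.} With $t=1$ we have $h=h_1z+h_0$. Equations \eqref{eq2.5} make $v_1,\dots,v_d$ lie in $K[x]$, Lemma \ref{lem2.1} gives $v_d\in K^*$ and $v_{d-1}'\in K$, and $h_0=-\frac{v_{d-1}'}{dv_d}u+c_2(x)$. If $d\geq 2$, use \eqref{eq2.18}: after a linear change we may assume $v_{d-1}'=0$, and then all $v_j$ with $j\geq 2$ are constant. The remaining equations, namely the $z^0$ coefficients of \eqref{eq2.6} and \eqref{eq2.12}, then show that the $z$-dependence of $v$ is removable only if the $v_j$ with $j\geq 2$ vanish, which gives $d=1$. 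With $d=1$, the linear independence hypothesis and the coefficient relations force $h_1=0$. Indeed, \eqref{eq2.1} gives $h_1=-u_x-v_{0y}$, and the remaining identities should make $h$ a function of $u$.

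\emph{Step 3: solve the system with $d=1$, $h=h_0$.} The conditions become
\[
v_{0y}=-u_x,\qquad h_{0y}v_1=-u_yv_{0x}-u_x^2,\qquad h_{0x}v_1u_y=-u_x^3-u_xu_yv_{0x}.
\]
Together these say that $u_xh_{0y}=u_yh_{0x}$, so $h_0=\varphi(u)$. Lemma \ref{lem2.1} gives $u_y=a\in K^*$, which is inconsistent with $u=g(ay+b)$ when $\deg g>1$. So the argument must be organized in terms of $\partial u$, that is, use $u_y\in K^*$ only where the lemma really proves it. Substituting $h_0=\varphi(u)$ and $v_0$ obtained by integrating $v_{0y}=-u_x$, we would derive $u=g(ay+b(x))$ with $b''$ constant, $\varphi$ quadratic, and $v_0=-a^{-1}b'g-v_1l_2x$. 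Finally we check that $H(0)=0$ fixes the constants.
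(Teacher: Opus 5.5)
Your proposal has two genuine gaps, and its overall target is misaimed. Every map in the conclusion has $\deg_z h=0$ and $\deg_z v=1$. For instance, $u=y+x^2$, $v=z-2xy-2x^3$, $h=u^2$ is linearly independent with $JH$ nilpotent. So the theorem amounts to two claims: (i) $t\ge 1$ is impossible; (ii) for $t=0$ one gets the stated normal form.

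You leave (ii) open, expecting that $t=0$ ``contradicts independence or reduces to the case below''. The first option cannot happen, as the example shows. Your Step~3, the only place the normal form is produced, assumes $d=1$, but you obtained $d=1$ only under $t=1$. For $t=1$ the hypothesis already gives $d\ge 3t-1=2$. So ``$t=1$ forces $d=1$'' and ``force $h_1=0$'' are contradictions, not structural information that can be carried into the $t=0$ case.

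The paper does not rederive (ii); it cites Theorem~2.8 of \cite{13} (or Theorem~2.5 of \cite{14}). If you want to prove it yourself, you must establish $d=1$ and $v_1\in K^*$ for $t=0$ directly. You also cannot use $u_y\in K^*$ there, since Lemma~\ref{lem2.1} needs $t\ge 1$, as you noticed. Moreover, $u_xh_{0y}=u_yh_{0x}$ only gives algebraic dependence of $u$ and $h_0$, not $h_0\in K[u]$. The derivation of $g$, of $b''\in K$ and of a quadratic $\varphi$ is only asserted.

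For (i), Step~1 contains no argument, only the hope that some coefficient of \eqref{eq2.12} works, and its goal ``$t=1$'' is wrong: every $t\ge 1$, including $t=1$, must be ruled out. The missing computation is at $z^0$ of \eqref{eq2.35}. Put $\mu=\frac{v_{d-1}'}{dv_d}\in K$ and use:
\begin{itemize}
\item $h_0=-\mu u+c_2(x)$ and $c_2'=\mu c_1=\mu h_1$ (Lemma~\ref{lem2.2});
\item $v_{0y}=-u_x-h_1$;
\item $u_yv_{0x}=\mu u_yv_1-(u_x^2+u_xh_1+h_1^2)$ (Lemma~\ref{lem2.1}).
\end{itemize}
Then that coefficient becomes $u_yv_1\mu(h_1-u_x)=-h_1^3+u_yv_1\mu(h_1-u_x)$. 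This is because $-u_x^3$ cancels against $(h_1-u_x)(u_x^2+u_xh_1+h_1^2)$. Hence $h_1=0$, which already kills $t=1$.

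The paper then inducts on $i$. Once $h_1=\dots=h_{i-1}=0$, it compares the coefficients of $z^{3i-3}$ in \eqref{eq2.35}, where only $-(ih_i)^3/u_y$ survives. Thus $h\in K[x,y]$, contradicting $t\ge 1$.

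Your direct argument for $u_y\neq 0$ (if $u_y=0$, the triangular shape of $JH$ forces $u_x=0$, hence $u=0$) is correct and avoids citing \cite{13}.
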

\begin{proof}
Suppose that $v=\sum_{i=0}^dv_iz^i$, $h=\sum_{j=0}^th_jz^j$ with $v_dh_t\neq 0$ and $v_i, h_j\in K[x,y]$ for all $1\leq i\leq d$, $1\leq j\leq t$.

If $t= 0$, then the conclusion follows from Theorem 2.8 in \cite{13} or Theorem 2.5 in \cite{14}. Thus, we can assume that $t\geq 1$ in the following arguments.

If $u_y=0$, then it follows from Proposition 2.2 in \cite{13} that $u,~v,~h$ are linearly dependent over $K$, which is a contradiction. Thus, we have $u_y\neq 0$.

Since $JH$ is nilpotent, we have
\begin{eqnarray}
-u_{x}-v_{y}=h_{z},\label{eq2.30}\\
-u_{x}v_{y}-v_{x}u_{y}-u_{x}^{2}-v_{y}^{2}=h_{y}v_{z},\label{eq2.31}\\
-u_{x}^{3}-2u_{x}u_{y}v_{x}-v_{x}v_{y}u_{y}=u_{y}v_{z}h_{x}.\label{eq2.32}
\end{eqnarray}
It follows from equation \eqref{eq2.30} that
\begin{equation}\label{eq2.33}
-u_{x}-(v_{dy}z^{d}+\cdots+v_{1y}z+v_{0y})=th_{t}z^{t-1}+\cdots+2h_{2}z+h_{1}.
\end{equation}
Then we have $v_t,v_{t+1},\ldots,v_d\in K[x]$ and
\begin{eqnarray}
(v_{t-1})_y=-th_{t},\notag\\
	(v_{t-2})_y=-(t-1)h_{t-1},\notag\\
	\vdots~~~~~~~~~~\label{eq2.34}\\
	v_{1y}=-2h_{2},\notag\\
	-u_{x}-v_{0y}=h_{1}\notag
\end{eqnarray}
by comparing the coefficients of the degree of $z$ of equation \eqref{eq2.33}. It follows from equations \eqref{eq2.32}, \eqref{eq2.34} that
\begin{equation}\label{eq2.35}
\begin{split}
-u_{x}^{3}-2u_{x}u_{y}(v_{dx}z^{d}+\cdots+v_{0x})-u_{y}(v_{dx}z^{d}+\cdots+v_{0x})( (v_{t-1})_yz^{t-1}+\\\cdots+v_{0y})
	=u_{y}(h_{tx}z^{t}+\cdots+h_{0x})(dv_{d}z^{d-1}+\cdots+v_{1}).
\end{split}
\end{equation}
Then we have
$$u_{y}v_{1}h_{0x}=-u_{x}^{3}-2u_{x}u_{y}v_{0x}-u_{y}v_{0y}v_{0x}$$
by comparing the coefficients of $z^0$ of equation \eqref{eq2.35}. Substituting equations \eqref{eq2.17}, \eqref{eq2.34} to the above equation, we have
\begin{equation}\label{eq2.36}
u_{y}v_{1}\left( \frac{-v_{d-1}^{'}}{dv_{d}}u_{x}+c^{'}_{2}(x)\right) =-u_{x}^{3}+u_{y}v_{0x}(h_{1}-u_{x}).
\end{equation}
It follows from Lemma \ref{lem2.1} and Lemma \ref{lem2.2} that $v_{0x}=-\frac{u_{x}^{2}+h_{1}u_{x}+h_{1}^{2}}{u_{y}}+\frac{v_{1}v_{d-1}^{'}}{dv_{d}}$, $c_1(x)=h_1(x)$ and $c_{2}^{'}(x)=\frac{c_{1}(x)v_{d-1}^{'}}{dv_{d}}$. Substituting these equations to equation \eqref{eq2.36}, we have $h_1=0$. Next we prove that $h_i=0$ for all $2\leq i\leq t$ by induction on $i$.
Suppose that $h_1=h_2=\cdots=h_{i-1}=0$. We need to prove that $h_i=0$.

$(1)$ If $2<3i-3<t$, then we have
\begin{equation}\label{eq2.37}
\begin{split}
(3i-2)v_{3i-2}h_{0x}+(3i-3)v_{3i-3}h_{1x}+\cdots+v_{1}(h_{3i-3})_x\\=(h_{1}-u_{x})(v_{3i-3})_x+2h_{2}(v_{3i-4})_x+\cdots+(3i-2)h_{3i-2}v_{0x}
\end{split}
\end{equation}
by comparing the coefficients of $z^{3i-3}$ of equation \eqref{eq2.35} and substituting equation \eqref{eq2.34} to it. Substituting the equations in Lemma \ref{lem2.1}, Lemma \ref{lem2.2} and equation  \eqref{eq2.11} to equation \eqref{eq2.37}, we have
\begin{equation}\nonumber
\begin{split}
-\frac{3h_{1}^{2}\cdot(3i-2)h_{3i-2}}{u_{y}}-\frac{3h_{1}}{u_{y}}\sum_{\gamma=t-3i+4}^{t-1}(t-\gamma+1)(\gamma-t+3i-2)h_{t-\gamma+1}h_{\gamma-t+3i-2}\\-\frac{2h_{2}}{u_{y}}\sum_{\gamma=t-3i+5}^{t-1}(t-\gamma+1)(\gamma-t+3i-3)h_{t-\gamma+1}h_{\gamma-t+3i-3}-\cdots\\
	 -\frac{ih_{i}}{u_{y}}\sum_{\gamma=t-2i+3}^{t-1}(t-\gamma+1)(\gamma-t+2i-1)h_{t-\gamma+1}h_{\gamma-t+2i-1} \\-\cdots-\frac{(3i-4)h_{3i-4}}{u_{y}}(2h_{2})^{2}=0.
\end{split}
\end{equation}
Since $h_1=h_2=\cdots=h_{i-1}=0$ by induction hypothesis, we have $-\frac{(ih_{i})^{3}}{u_{y}}=0$ by substituting them to the above equation. Thus, we have $h_i=0$.

$(2)$ If $3i-3>2t-2$, then we have
\begin{equation}\label{eq2.38}
\begin{split}
(3i-2)v_{3i-2}h_{0x}+(3i-3)v_{3i-3}h_{1x}+\cdots+(3i-t-1)v_{3i-t-1}(h_{t-1})_x\\=(h_{1}-u_{x})(v_{3i-3})_x+2h_{2}(v_{3i-4})_x+\cdots+th_{t}(v_{3i-t-2})_x
\end{split}
\end{equation}
by comparing the coefficients of $z^{3i-3}$ of equation \eqref{eq2.35} and substituting equation \eqref{eq2.34} to it. Substituting the equations in Lemma \ref{lem2.1}, Lemma \ref{lem2.2} and equation  \eqref{eq2.11} to equation \eqref{eq2.38}, we have
\begin{equation}\nonumber
\begin{split}
-\frac{(3i-2t)h_{3i-2t}(th_{t})^{2}}{u_{y}}-\frac{(3i-2t+1)h_{3i-2t+1}}{u_{y}}[th_{t}(t-1)h_{t-1}+(t-1)h_{t-1}th_{t}]\\-\cdots-\frac{ih_{i}}{u_{y}}\sum_{\gamma=1}^{2t-2i+1}(t-\gamma+1)(\gamma-t+2i-1)h_{t-\gamma+1}h_{\gamma-t+2i-1}\\
-\frac{(i+1)h_{i+1}}{u_{y}}\sum_{\gamma=1}^{2t-2i+2}(t-\gamma+1)(\gamma-t+2i-2)h_{t-\gamma+1}h_{\gamma-t+2i-2}-\cdots\\-\frac{th_{t}}{u_{y}}\sum_{\gamma=1}^{3t-3i+1}(t-\gamma+1)(\gamma-2t+3i-1)h_{t-\gamma+1}h_{\gamma-2t+3i-1}=0.
\end{split}
\end{equation}
Since $h_1=h_2=\cdots=h_{i-1}=0$ by induction hypothesis, we have $-\frac{(ih_{i})^{3}}{u_{y}}=0$ by substituting them to the above equation. Thus, we have $h_i=0$.

$(3)$ If $2t+6\leq 6i< 3t+6$, then we have
\begin{equation}\label{eq2.39}
\begin{split}
-\frac{3h_{1}}{u_{y}}\sum_{\gamma=1}^{2t-3i+2}(t-\gamma+1)(\gamma-t+3i-2)h_{t-\gamma+1}h_{\gamma-t+3i-2}\\-\frac{2h_{2}}{u_{y}}\sum_{\gamma=1}^{\gamma-t-3i+3}(t-\gamma+1)(\gamma-t+3i-3)h_{t-\gamma+1}h_{\gamma-t+3i-3}-\cdots\\
	 -\frac{(3i-t-2)h_{3i-t-2}}{u_{y}}\sum_{\gamma=1}^{t-1}(t-\gamma+1)(\gamma+1)h_{t-\gamma+1}h_{\gamma+1}\\-\frac{(3i-t-1)h_{3i-t-1}}{u_{y}}\sum_{\gamma=2}^{t-1}(t-\gamma+1)\gamma h_{t-\gamma+1}h_{\gamma}-\cdots\\
	 -\frac{ih_{i}}{u_{y}}\sum_{\gamma=t-2i+3}^{t-1}(t-\gamma+1)(\gamma-t+2i-1)h_{t-\gamma+1}h_{\gamma-t+2i-1} -\cdots\\-\frac{th_{t}}{u_{y}}\sum_{\gamma=2t-3i+3}^{t-1}(t-\gamma+1)(\gamma-2t+3i-1)h_{t-\gamma+1}h_{\gamma-2t+3i-1}=0
\end{split}
\end{equation}
by substituting the equations in Lemma \ref{lem2.1}, Lemma \ref{lem2.2} and equation  \eqref{eq2.11} to equation \eqref{eq2.38}.

$(4)$ If $3t+6\leq 6i\leq 4t+2$, then we have
\begin{equation}\label{eq2.40}
\begin{split}
-\frac{3h_{1}}{u_{y}}\sum_{\gamma=1}^{2t-3i+2}(t-\gamma+1)(\gamma-t+3i-2)h_{t-\gamma+1}h_{\gamma-t+3i-2}\notag\\-\frac{2h_{2}}{u_{y}}\sum_{\gamma=1}^{\gamma-t-3i+3}(t-\gamma+1)(\gamma-t+3i-3)h_{t-\gamma+1}h_{\gamma-t+3i-3}-\cdots\\-\frac{ih_{i}}{u_{y}}\sum_{\gamma=1}^{2t-2i+1}(t-\gamma+1)(\gamma-t+2i-1)h_{t-\gamma+1}h_{\gamma-t+2i-1}-\cdots\\
	 -\frac{(3i-t-2)h_{3i-t-2}}{u_{y}}\sum_{\gamma=1}^{t-1}(t-\gamma+1)(\gamma+1)h_{t-\gamma+1}h_{\gamma+1}\\-\frac{(3i-t-1)h_{3i-t-1}}{u_{y}}\sum_{\gamma=2}^{t-1}(t-\gamma+1)\gamma h_{t-\gamma+1}h_{\gamma}
	 -\cdots\\-\frac{th_{t}}{u_{y}}\sum_{\gamma=2t-3i+3}^{t-1}(t-\gamma+1)(\gamma-2t+3i-1)h_{t-\gamma+1}h_{\gamma-2t+3i-1}=0
\end{split}
\end{equation}
by substituting the equations in Lemma \ref{lem2.1}, Lemma \ref{lem2.2} and equation \eqref{eq2.11} to equation \eqref{eq2.38}. Since $h_1=h_2=\cdots=h_{i-1}=0$ by induction hypothesis, we have $-\frac{(ih_{i})^{3}}{u_{y}}=0$ by substituting them to equations \eqref{eq2.39}, \eqref{eq2.40}, respectively. Thus, we have $h_i=0$. Hence we have $h_1=h_2=\cdots=h_t=0$, which is a contradiction. Then the conclusion follows.
\end{proof}

\section{The case of $2\deg_zh\leq\deg_zv\leq 3\deg_zh-2$}

In the section, we classify polynomial maps of the form
$H=(u(x,y),v(x,y,z),\allowbreak h(x,y,z))$ in the case that $JH$ is
nilpotent and $2\deg_zh\leq\deg_zv\leq 3\deg_zh-2$.

\begin{prop}\label{prop3.1}
Let $H=(u(x,y),v(x,y,z),h(x,y,z))$ be a polynomial map of $K[x,y,z]$ with $H(0)= 0$. If $JH$ is nilpotent and $2\deg_zh\leq\deg_zv\leq 3\deg_zh-2$, then $u,~v,~h$ are linearly dependent over $K$.
\end{prop}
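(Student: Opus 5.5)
We argue by contradiction: assume $u,v,h$ are linearly independent over $K$. Write $v=\sum_{i=0}^dv_iz^i$, $h=\sum_{j=0}^th_jz^j$ with $v_dh_t\neq0$, so $2t\le d\le 3t-2$. In particular $t\ge 2$, since $t\le 1$ would force $2t>3t-2$ and the range would be empty. If $u_y=0$, Proposition 2.2 in \cite{13} gives linear dependence, contradicting our assumption, so $u_y\neq 0$. Since $d\ge 2t$, Lemma \ref{lem2.1} applies in full. It gives $u_y,v_d,h_t\in K^*$ and $v_{d-1}'\in K$, together with $h_1=c_1(x)\in K[x]$ and $h_0=-\frac{v_{d-1}'}{dv_d}u+c_2(x)$ from \eqref{eq2.16}, \eqref{eq2.17}. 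It also supplies all the formulas for $(v_k)_x$, which express every $(v_k)_x$ with $k\le 2t-2$ through the $h_j$ and $u_x$. The $(v_k)_x$ with $k$ in the upper range $d-t+1\ge k\ge 2t-1$ are then proportional to $v_{k+1}$ via $v_{d-1}'$.

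The difference from Section 2 is that Lemma \ref{lem2.2} is no longer available in full. Its proof used $(v_{d-\alpha+1})_x$, $v'_{d-t+1}$ and $v'_{d-t}$ from the ``upper'' formula of Lemma \ref{lem2.1}. When $d\le 3t-2$, some of the indices $d-\alpha+1$, $d-t+1$, $d-t$ fall into the range $\le 2t-2$, where $(v_k)_x$ carries the extra quadratic terms $-\frac1{u_y}\sum h h$. So I would redo the induction of Lemma \ref{lem2.2} using equation \eqref{eq2.12}, comparing coefficients of $z^{d+t-\alpha}$ for $2\le\alpha\le t-1$ and then $z^d$, $z^{d-1}$. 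Now each $h'_{t-\alpha+1}$, $c_1'$ and $c_2'$ equals the Section 2 value plus a correction term. This correction is a cubic expression in the $h_j$ (all in $K[x]$ by \eqref{eq2.7}) divided by $u_y$. The key structural point is that the corrections first appear at $\alpha=d-3t+3$ or so. For the smallest such index the correction is, up to a nonzero constant, $\frac{(th_t)^3}{u_y}$ or a similar leading cubic monomial, with $h_t,u_y\in K^*$.

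The contradiction should then come from comparing degrees in $x$, or from the constancy of $h_t$ and $v_d$. Concretely, I would look at the first coefficient comparison where a correction appears. On one side we have a derivative $h_j'$ of a polynomial in $x$ (or a combination $dv_d h_j'+\dots$). On the other side we have a nonzero constant multiple of $h_t^3/u_y$ plus terms already determined. Integrating then forces polynomial growth in $x$. I would feed this growth into the lower coefficient equations, at $z^{3i-3}$ as in the proof of Theorem \ref{thm2.3} and at $z^0$ in \eqref{eq2.36}. That should yield an inconsistency, namely a polynomial identity in $x$ whose top-degree coefficient is a nonzero multiple of $h_t^{k}$. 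Alternatively, one can mimic the proof of Theorem \ref{thm2.3}. Show inductively that $h_1=\dots=h_{i-1}=0$ forces $h_i=0$, by isolating the term $-(ih_i)^3/u_y$ in the $z^{3i-3}$ coefficient equation of \eqref{eq2.35}. Then check that the newly appearing correction terms in the $h'$ and $c'$ formulas are products involving some $h_j$ with $j<i$, so they vanish under the induction hypothesis. The conclusion $h_t=0$ contradicts $h_t\in K^*$.

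The main obstacle is this bookkeeping. I must verify that, in the window $2t\le d\le 3t-2$, every extra term introduced by the modified Lemma \ref{lem2.2} contains a factor $h_j$ with $j<i$ at the stage of the induction where it is used. The index ranges, such as $3i-3$ against $d-t$ or $2t-2$, must be split into cases like those in Theorem \ref{thm2.3}. I expect the boundary case $d=2t$ to be the delicate one, and I would try it first with small $t$ (say $t=2$, $d=4$) to confirm that the cube term survives.
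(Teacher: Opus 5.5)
\textbf{Where you agree with the paper.} Your setup is the paper's: assume independence, get $t\ge 2$ and $u_y\neq0$, apply Lemma \ref{lem2.1} in full, and rerun the induction of Lemma \ref{lem2.2} while tracking correction terms. You are also right that the first correction is a nonzero constant multiple of $(th_t)^3/u_y$.

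You place it wrongly, though. Write $d=3t-\beta-1$ with $1\le\beta\le t-1$. The uncorrected formulas $h'_{t-\alpha+1}=\frac{(t-\alpha+2)h_{t-\alpha+2}v'_{d-1}}{dv_d}$ survive for $2\le\alpha\le t-\beta+1$. The first correction enters at $\alpha=d-2t+3$, i.e.\ at the coefficient of $z^{3t-3}$ in \eqref{eq2.12}; your $\alpha=d-3t+3\le1$ is out of range. It comes in through the term $th_t(v_{2t-2})_x$, which carries $-(th_t)^2/u_y$. This gives $h'_{\beta-1}=\frac{\beta h_\beta v'_{d-1}}{dv_d}+q_0$ with $q_0=-\frac{(th_t)^3}{dv_du_y}\in K^*$ (read $c_1'$, resp.\ $c_2'$, when $\beta=2$, resp.\ $\beta=1$). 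All later $h'_j$, $c_1'$, $c_2'$ then pick up corrections $q_j\in K[x]$.

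\textbf{The gap is the ending: neither of your two closing arguments works.}

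\emph{Growth in $x$.} This is not contradictory by itself, since the $h_j$ are a priori arbitrary elements of $K[x]$. You never name an equation whose top $x$-coefficient would be forced to vanish.

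\emph{The induction of Theorem \ref{thm2.3}.} This rests on the claim that every correction contains a factor $h_j$ with $j<i$. That claim is false: $q_0$ involves only $h_t$ and never vanishes. Already for $t=2$, $d=4$, the $z^0$-equation \eqref{eq2.36} becomes $u_yv_1q_0=-h_1^3$ instead of $h_1^3=0$.

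\textbf{The missing idea} is to compare degrees in $y$, not in $x$. The quantities $h_1,\dots,h_t$, $u_x$, $v_t,\dots,v_d$ and all $m_k,q_j$ lie in $K[x]$. By contrast, $v_{t-1}=-th_ty+(\text{element of }K[x])$.

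The right coefficient to examine is $z^{t+\beta-3}$ in \eqref{eq2.12}; it is chosen so that the known nonzero $q_0$ gets multiplied by $v_{t-1}$. Substitute the corrected formulas there. The $v'_{d-1}$-parts cancel as in Section 2, leaving $\sum_{j=0}^{\beta-1}(t-1+j)v_{t-1+j}q_j$ equal to an element of $K[x]$. Only $(t-1)v_{t-1}q_0$ depends on $y$. Its $y$-coefficient gives $-t(t-1)h_tq_0=0$, i.e.\ $(t-1)(th_t)^4=0$, which contradicts $t\ge2$ and $h_t\in K^*$. In your test case $t=2$, $d=4$, this is just the $y$-coefficient of $u_yv_1q_0=-h_1^3$.
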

\begin{proof}
We prove it by contradiction. Suppose that $u,~v,~h$ are linearly independent over $K$. It follows from Proposition 2.2 in \cite{13} that $u_y\neq 0$. Let $v=\sum_{i=0}^dv_iz^i$, $h=\sum_{j=0}^th_jz^j$ with $v_dh_t\neq 0$ and $v_i, h_j\in K[x,y]$ for all $1\leq i\leq d$, $1\leq j\leq t$. Since $2\deg_zh\leq\deg_zv\leq 3\deg_zh-2$, we have $t\geq 2$. Since $JH$ is nilpotent, we have
\begin{eqnarray}
-u_{x}-v_{y}=h_{z},\label{eq3.1}\\
-u_{x}v_{y}-v_{x}u_{y}-u_{x}^{2}-v_{y}^{2}=h_{y}v_{z},\label{eq3.2}\\
-u_{x}^{3}-2u_{x}u_{y}v_{x}-v_{x}v_{y}u_{y}=u_{y}v_{z}h_{x}.\label{eq3.3}
\end{eqnarray}
It follows from equation \eqref{eq3.1} that
\begin{equation}\label{eq3.4}
-u_{x}-(v_{dy}z^{d}+\cdots+v_{1y}z+v_{0y})=th_{t}z^{t-1}+\cdots+2h_{2}z+h_{1}.
\end{equation}
Then we have $v_t,v_{t+1},\ldots,v_d\in K[x]$ and
\begin{eqnarray}
(v_{t-1})_y=-th_{t},\notag\\
	(v_{t-2})_y=-(t-1)h_{t-1},\notag\\
	\vdots~~~~~~~~~~\label{eq3.5}\\
	v_{1y}=-2h_{2},\notag\\
	-u_{x}-v_{0y}=h_{1}\notag
\end{eqnarray}
by comparing the coefficients of the degree of $z$ of equation \eqref{eq3.4}. It follows from equations \eqref{eq3.3}, \eqref{eq3.5} that
\begin{equation}\label{eq3.6}
\begin{split}
-u_{x}^{3}-2u_{x}u_{y}(v_{dx}z^{d}+\cdots+v_{0x})-u_{y}(v_{dx}z^{d}+\cdots+v_{0x})( (v_{t-1})_yz^{t-1}+\\\cdots+v_{0y})
	=u_{y}(h_{tx}z^{t}+\cdots+h_{0x})(dv_{d}z^{d-1}+\cdots+v_{1}).
\end{split}
\end{equation}
It follows from Lemma \ref{lem2.1} that $h_1,~h_2,\cdots,h_{t-1}\in K[x]$, $v_d,~h_t,~u_y\in K^*$. It's easy to see that $u_x\in K[x]$. Thus, we have
\begin{equation}\label{eq3.7}
(v_{2t-k})_x=\frac{(2t-k+1)v_{2t-k+1}v_{d-1}^{'}}{dv_{d}}+m_{k-2}(x),\,2\leq k\leq2t-1
\end{equation}
with $m_0(x)=-\frac{1}{u_y}(th_t)^2$. Let $d=3t-\beta-1$ with $1\leq\beta<t$. Next we prove that $h_{t-\alpha+1}^{'}=\frac{(t-\alpha+2)h_{t-\alpha+2}v_{d-1}^{'}}{dv_{d}}$ for all $2\leq\alpha\leq t-\beta+1$ by induction on $\alpha$.

If $\alpha=2$, then we have $h_{t-1}^{'}=\frac{th_{t}v_{d-1}^{'}}{dv_{d}}$ by following the arguments of Lemma \ref{lem2.2}. Suppose that
\begin{equation}\label{eq3.8}
h_{t-\kappa+1}^{'}=\dfrac{(t-\kappa+2)h_{t-\kappa+2}v_{d-1}^{'}}{dv_{d}}
\end{equation}
for all $2\leq\kappa\leq\alpha-1$. Then we have
\begin{equation}\nonumber
\begin{split}
dv_{d}(h_{t-\alpha+1})_x+(d-1)v_{d-1}(h_{t-\alpha+2})_x+\cdots+(d-\alpha+2)v_{d-\alpha+2}(h_{t-1})_x\\=-(v_{d-1})_x(v_{t-\alpha+1})_y-(v_{d-2})_x(v_{t-\alpha+2})_y-\cdots-(v_{d-\alpha+1})_x(v_{t-1})_y
\end{split}
\end{equation}
by comparing the coefficients of $z^{d+t-\alpha}$$(d+t-\alpha\geq d)$ of equation \eqref{eq3.6}. Substituting equation \eqref{eq3.5} to the above equation, we have
\begin{equation}\label{eq3.9}
\begin{split}
dv_{d}h_{t-\alpha+1}^{'}+(d-1)v_{d-1}h_{t-\alpha+2}^{'}+\cdots+(d-\alpha+2)v_{d-\alpha+2}h_{t-1}^{'}\\
=(t-\alpha+2)h_{t-\alpha+2}(v_{d-1})_x+(t-\alpha+3)h_{t-\alpha+3}(v_{d-2})_x+\cdots+th_{t}(v_{d-\alpha+1})_x.
\end{split}
\end{equation}
Since $d=3t-\beta-1$ and $2\leq\alpha\leq t-\beta+1$, we have $d-\alpha+1\geq 2t-1$. Substituting equations \eqref{eq2.18}, \eqref{eq3.8} to equation \eqref{eq3.9}, we have
\begin{equation}\label{eq3.10}
h_{t-\alpha+1}^{'}=\frac{(t-\alpha+2)h_{t-\alpha+2}v_{d-1}^{'}}{dv_{d}},\,2\leq\alpha\leq t-\beta+1.
\end{equation}
Comparing the coefficients of $z^{d+\beta-2}$$(d+\beta-2\geq d+1)$ of equation \eqref{eq3.6} and substituting equation \eqref{eq3.5} to it, we have
\begin{equation}\nonumber
\begin{split}
dv_{d}h_{\beta-1}^{'}+(d-1)v_{d-1}h_{\beta}^{'}+\cdots+(d+\beta-t)v_{d+\beta-t}h_{t-1}^{'}
\\=\beta h_{\beta}(v_{d-1})_x+(\beta+1)h_{\beta+1}(v_{d-2})_x+\cdots+th_{t}(v_{d+\beta-t-1})_x.
\end{split}
\end{equation}
Substituting equations \eqref{eq2.18}, \eqref{eq3.7}, \eqref{eq3.10} to the above equation, we have
\begin{equation}\label{eq3.11}
h_{\beta-1}^{'}=\frac{\beta h_{\beta}v_{d-1}^{'}}{dv_{d}}+q_{0}(x),\,q_{0}(x)=\frac{th_{t}m_{0}(x)}{dv_{d}}=-\frac{(th_t)^3}{dv_du_y}.
\end{equation}
Suppose that
\begin{equation}\label{eq3.12}
h_{\beta-\kappa}^{'}=\frac{(\beta-\kappa+1) h_{\beta-\kappa+1}v_{d-1}^{'}}{dv_{d}}+q_{\kappa-1}(x)
\end{equation}
for all $1\leq \kappa\leq j\;(1\leq j\leq\beta-2)$. Then we have
\begin{equation}\nonumber
\begin{split}
dv_{d}h_{\beta-j-1}^{'}+(d-1)v_{d-1}h_{\beta-j}^{'}+\cdots+(d-j)v_{d-j}h_{\beta-1}^{'}+\cdots\\
+(d+\beta-t-j)v_{d+\beta-t-j}h_{t-1}^{'}\\
=(\beta-j)h_{\beta-j}(v_{d-1})_x+(\beta-j+1)h_{\beta-j+1}(v_{d-2})_x+\cdots+(t-j)h_{t-j}(v_{d+\beta-t-1})_x\\
+\cdots+th_{t}(v_{d+\beta-t-j-1})_x
\end{split}
\end{equation}
by comparing the coefficients of $z^{d+\beta-j-2}$$(d+\beta-j-2\geq d+1)$ of equation \eqref{eq3.6} and substituting equation \eqref{eq3.5} to it. Since $d=3t-\beta-1$, we have $d+\beta-t-1=2t-2$. Substituting equations \eqref{eq2.18}, \eqref{eq3.7}, \eqref{eq3.10}, \eqref{eq3.12} to the above equation, we have
\begin{equation}\nonumber
h_{\beta-j-1}^{'}=\frac{(\beta-j)h_{\beta-j}v_{d-1}^{'}}{dv_{d}}+q_{j}(x)
\end{equation}
with $q_{j}(x)=
-\frac{1}{dv_{d}}[(d-1)v_{d-1}q_{j-1}(x)+\cdots+(d-j)v_{d-j}q_{0}(x)]+\frac{1}{dv_{d}}[(t-j)h_{t-j}m_{0}(x)+\cdots+th_{t}m_{j}(x)]$.
Clearly, we have $q_j(x)\in K[x]$. Thus, we have
\begin{equation}\label{eq3.13}
h_{\beta-j}^{'}=\frac{(\beta-j+1) h_{\beta-j+1}v_{d-1}^{'}}{dv_{d}}+q_{j-1}(x),\;q_{j-1}(x)\in K[x]
\end{equation}
for all $1\leq j\leq\beta-2$. It follows from equation \eqref{eq2.14} that
\begin{equation}\nonumber
\begin{split}
dv_{d}c_{1}^{'}(x)+(d-1)v_{d-1}h_{2}^{'}+\cdots+(d-\beta+2)v_{d-\beta+2}h_{\beta-1}^{'}+\cdots\\+(d-t+2)v_{d-t+2}h_{t-1}^{'}
=2h_{2}v_{d-1}^{'}+3h_3v_{d-2}^{'}+\cdots+(d-2t+3)h_{d-2t+3}v_{2t-2}^{'}\\+\cdots+th_{t}v_{d-t+1}^{'}.
\end{split}
\end{equation}
Substituting equations \eqref{eq2.18}, \eqref{eq3.7}, \eqref{eq3.10}, \eqref{eq3.13} to the above equation, we have
\begin{equation}\label{eq3.14}
c_{1}^{'}(x)=\frac{2h_{2}v_{d-1}^{'}}{dv_{d}}+q_{\beta-2}(x)
\end{equation}
with $q_{\beta-2}(x)=-\frac{1}{dv_{d}}[(d-1)v_{d-1}q_{\beta-3}(x)+\cdots+(d-\beta+2)v_{d-\beta+2}q_{0}(x)]+\frac{1}{dv_{d}}[(t-\beta+2)h_{t-\beta+2}m_{0}(x)+\cdots+th_{t}m_{\beta-2}(x)] $. It follows from equation \eqref{eq2.15} that
\begin{equation}\nonumber
\begin{split}
dv_{d}c_{2}^{'}(x)+(d-1)v_{d-1}c_{1}^{'}(x)+\cdots+(d-\beta+1)v_{d-\beta+1}h_{\beta-1}^{'}\\
+\cdots+(d-t+1)v_{d-t+1}h_{t-1}^{'}\\
=c_{1}(x)(v_{d-1})_x+2h_{2}(v_{d-2})_x+\cdots+(t-\beta+1)h_{t-\beta+1}(v_{d-t+\beta-1})_x+\cdots+th_{t}(v_{d-t})_x.
\end{split}
\end{equation}
Substituting equations \eqref{eq2.18}, \eqref{eq3.7}, \eqref{eq3.10}, \eqref{eq3.13}, \eqref{eq3.14} to the above equation, we have
\begin{equation}\label{eq3.15}
c_{2}^{'}(x)=\frac{c_{1}(x)v_{d-1}^{'}}{dv_{d}}+q_{\beta-1}(x)
\end{equation}
with $q_{\beta-1}(x)=-\frac{1}{dv_{d}}[(d-1)v_{d-1}q_{\beta-2}(x)+\cdots+(d-\beta+2)v_{d-\beta+2}q_{0}(x)]+\frac{1}{dv_{d}}[(t-\beta+1)h_{t-\beta+1}m_{0}(x)+\cdots+th_{t}m_{\beta-1}(x)]$.
Comparing the coefficients of $z^{t+\beta-3}$ of equation \eqref{eq3.6} and substituting equation \eqref{eq3.5} to it, we have
\begin{equation}\label{eq3.16}
\begin{split}
(t+\beta-2)v_{t+\beta-2}h_{0x}+(t+\beta-3)v_{t+\beta-3}h_{1x}+\cdots+(t-1)v_{t-1}h_{\beta-1}^{'}\\
+\cdots+(\beta-1)v_{\beta-1}h_{t-1}^{'}\\
=(h_{1}-u_{x})(v_{t+\beta-3})_x+2h_{2}(v_{t+\beta-4})_x+\cdots+th_{t}(v_{\beta-2})_x.
\end{split}
\end{equation}
Substituting equations \eqref{eq2.17}, \eqref{eq3.7}, \eqref{eq3.10}, \eqref{eq3.13}, \eqref{eq3.14}, \eqref{eq3.15} to equation \eqref{eq3.16}, we have
\begin{equation}\label{eq3.17}
\begin{split}
(t+\beta-2)v_{t+\beta-2}q_{\beta -1}(x)+(t+\beta-3)v_{t+\beta-3}q_{\beta -2}(x)+\cdots+(t-1)v_{t-1}q_{0}(x)\\
=(h_{1}-u_{x})m_{t-\beta+1}(x)+2h_{2}m_{t-\beta+2}(x)+\cdots+th_{t}m_{2t-\beta}(x).
\end{split}
\end{equation}
Since $v_t,~v_{t+1},\cdots,v_d\in K[x]$, $h_1,~h_2,\cdots,h_t,~u_x\in K[x]$, we have $-t(t-1)h_tq_0(x)=0$ by comparing the coefficients of $y$ of equation \eqref{eq3.17}. It follows from equation \eqref{eq3.11} that $-(t-1)\frac{(th_t)^4}{dv_du_y}=0$. Hence we have $h_t=0$, which is a contradiction. Thus, $u,~v,~h$ are linearly dependent over $K$.
\end{proof}

\begin{thm}\label{thm3.2}
Let $H=(u(x,y),v(x,y,z),h(x,y,z))$ be a polynomial map of $K[x,y,z]$ with $H(0)= 0$. Suppose that $u,~v,~h$ are linearly independent over $K$. If $JH$ is nilpotent and $\deg_zv\geq 2\deg_zh$, then $H$ has the form of Theorem \ref{thm2.3}
\end{thm}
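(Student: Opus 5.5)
Theorem \ref{thm3.2} follows by combining Theorem \ref{thm2.3} with Proposition \ref{prop3.1}, after a case split on where $d=\deg_zv$ lies relative to $t=\deg_zh$. Write $d=\deg_zv$ and $t=\deg_zh$, so the hypothesis is $d\geq 2t$.

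\emph{Case $d\geq 3t-1$.} The hypotheses of Theorem \ref{thm2.3} hold verbatim: $H(0)=0$, $u,v,h$ are linearly independent, $JH$ is nilpotent and $\deg_zv\geq 3\deg_zh-1$. So $H$ has the form stated there and there is nothing to prove.

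\emph{Case $2t\leq d\leq 3t-2$.} This is the range of Proposition \ref{prop3.1}. Since $JH$ is nilpotent and $H(0)=0$, that proposition gives that $u,v,h$ are linearly dependent over $K$. This contradicts the hypothesis of Theorem \ref{thm3.2}, so the case cannot occur.

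These two ranges cover all $d\geq 2t$, so the theorem follows.

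The only point needing care is the degenerate small values of $t$, which I would check first.
\begin{itemize}
\item If $t=0$, then $d\geq 0=3t-1+1$, so we are automatically in the first case. Theorem \ref{thm2.3} handles $t=0$ by citing \cite{13}, \cite{14}.
\item If $t=1$, then $3t-2=1<2=2t$, so the second range is empty and again $d\geq 3t-1$.
\item If $t\geq 2$, both ranges may be nonempty, and Proposition \ref{prop3.1} explicitly covers $t\geq 2$.
\end{itemize}
There is also the case $h=0$, where $\deg_zh$ is undefined or $-\infty$. Here linear independence of $u,v,h$ already fails, so this case is excluded by hypothesis.

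I therefore do not expect a genuine obstacle: the theorem is the union of the two previously established ranges. The main thing to verify is that the integer ranges $d\geq 3t-1$ and $2t\leq d\leq 3t-2$ exhaust $d\geq 2t$ with no gap, which is immediate.
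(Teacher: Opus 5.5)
Your proposal is correct and is exactly the paper's argument: the hypothesis $\deg_zv\geq 2\deg_zh$ splits into $\deg_zv\geq 3\deg_zh-1$, which is handled by Theorem~\ref{thm2.3}, and $2\deg_zh\leq\deg_zv\leq 3\deg_zh-2$, which Proposition~\ref{prop3.1} rules out because it forces linear dependence of $u,v,h$. Your extra checks of $t=0$, $t=1$ and $h=0$ are accurate and only make explicit what the paper leaves implicit.
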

\begin{proof}
If $\deg_zv\geq 3\deg_zh-1$, then the conclusion follows from Theorem \ref{thm2.3}. If $2\deg_zh\leq\deg_zv\leq 3\deg_zh-2$, then the conclusion follows from Proposition \ref{prop3.1}.
\end{proof}

\begin{thm}\label{thm3.3}
Let $H=(u(x,y),v(x,y,z),h(x,y,z))$ be a polynomial map of $K[x,y,z]$ with $H(0)= 0$. Suppose that $u,~v,~h$ are linearly independent over $K$. If $JH$ is nilpotent and $\deg_zh\leq 3$, then there exist $T\in \operatorname{GL}_3(K)$ such that $THT^{-1}$ has the form of Theorem \ref{thm2.3}
\end{thm}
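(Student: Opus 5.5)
We argue by cases on $t=\deg_zh\in\{0,1,2,3\}$, writing $d=\deg_zv$ and using the notation $v=\sum_{i=0}^d v_iz^i$, $h=\sum_{j=0}^t h_jz^j$ of Lemma \ref{lem2.1}. The idea is to reduce each case either to Theorem \ref{thm3.2}, which needs $d\geq 2t$, or to earlier classifications in \cite{13}. In every case the matrix $T$ will be a permutation or elementary matrix mixing $v$ and $h$. Such a $T$ preserves the shape $(u(x,y),\ast,\ast)$ and the nilpotency of the Jacobian, since $J(THT^{-1})=T\,(JH)(T^{-1}x)\,T^{-1}$. By Proposition 2.2 in \cite{13} we may assume $u_y\neq 0$, because otherwise $u,v,h$ are linearly dependent.

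\textbf{Case $t\leq 2$.} This case is covered by \cite{13}, which classifies all $H=(u(x,y),v(x,y,z),h(x,y,z))$ with $JH$ nilpotent and $\deg_zh\leq 2$. We only need to check that, up to conjugation by some $T\in\operatorname{GL}_3(K)$ fixing the first coordinate, the linearly independent maps in that list take the form of Theorem \ref{thm2.3}. When $t=0$ one may take $T=I$, by Theorem 2.8 in \cite{13}.

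\textbf{Case $t=3$, $d\geq 6$.} Here $d\geq 2t$, so Theorem \ref{thm3.2} applies directly with $T=I$.

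\textbf{Case $t=3$, $d\leq 5$.} This is the main case, and I expect it to be the main obstacle. First, if $\deg_zv<\deg_zh$, we swap the last two coordinates. The resulting map is not literally of the required shape, since $J$ of the swapped map is conjugate by the permutation, but its degree pattern is reversed. So the first goal is to show directly that $d\leq 5$ with $t=3$ forces degeneration. Compare top $z$-degrees in the trace and second-invariant equations \eqref{eq2.1}--\eqref{eq2.3}.

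\begin{enumerate}[(a)]
\item \eqref{eq2.1} gives $v_y=-u_x-h_z$. If $d>t-1$, the coefficients $v_i$ with $i\geq t$ lie in $K[x]$.
\item \eqref{eq2.2}: the $z$-degree of $h_yv_z$ is at most $t+d-1$, while $v_y^2$ has degree $2(t-1)=4$ in $z$ and $-u_yv_x$ has degree $d$. For $d\in\{3,4,5\}$, the comparison pushes the top coefficients of $h$ into $K[x]$ and yields relations as in Lemma \ref{lem2.1}.
\item \eqref{eq2.3}: compare top $z$-coefficients again.
\end{enumerate}

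I would then look for an elementary $T$ replacing $v$ by $v-\lambda h^k$. This is not linear in $h$ unless $k=1$, so only $v\mapsto v-\lambda h$ or $h\mapsto h-\mu v$ is allowed. Such a move can cancel the leading $z$-term whenever $d=t$. When $d=t=3$, choose $\mu$ with $h-\mu v$ of lower $z$-degree. This reduces to $\deg_zh\leq 2$, which is handled by \cite{13}.

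The remaining subcases are $d\in\{4,5\}$ and $d\leq 2$. For $d\leq 2<t$, swap roles via $T$ exchanging $v$ and $h$. This requires re-deriving the nilpotency equations with $v$ and $h$ exchanged, i.e.\ the shape $(u,h,v)$, which is symmetric in the setup since both depend on $z$. The swapped map has $\deg_z(\text{second})=3\geq 2\cdot\deg_z(\text{third})$ whenever $d\leq 1$, so Theorem \ref{thm3.2} applies. The value $d=2$ would need the $\deg_zh\leq 2$ classification after the swap.

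For $d\in\{4,5\}$, I would run the coefficient-comparison scheme of Proposition \ref{prop3.1} with $\beta$ outside its range, aiming at the same contradiction $h_t^4=0$ or $h_t^3=0$ from the $y$-coefficient of a low-degree comparison in \eqref{eq3.6}. This explicit comparison for $d=4,5$, $t=3$ is where I expect the real work to lie.
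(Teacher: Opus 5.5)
There is a genuine gap. Your skeleton matches the paper's: $t\le 2$ by \cite{13}, $t=3$ with $d\ge 6$ by Theorem~\ref{thm3.2}, and $d\le t$ disposed of separately. But the remaining case, $t=3$ with $d\in\{4,5\}$, is exactly the new content of the theorem (Propositions~\ref{prop3.5} and~\ref{prop3.6}), and you leave it undone.

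The method you suggest for it does not transfer. The scheme of Proposition~\ref{prop3.1} starts from Lemma~\ref{lem2.1}, and its conclusions $u_y\in K^*$, $h_1\in K[x]$ and $h_0=-\frac{v_{d-1}'}{dv_d}u+c_2(x)$ all use $d\ge 2t$.
\begin{itemize}
\item When $d=5$, the $z^4$-coefficient of \eqref{eq2.2} is both the one that determines $h_0$ (it is $z^{d-1}$) and the one that forced $u_y\in K^*$ in Lemma~\ref{lem2.1} (it is $z^{2t-2}$). The term $-(v_{2y})^2=-9h_3^2$ is simply absorbed into $h_0=-\frac{v_4'}{5v_5}u-\frac{9h_3^2}{5v_5}y+c_2(x)$, and $u_y$ is no longer forced to be constant.
\item When $d=4$, the same term appears at $z^4=z^d$ and gives $h_1=-\frac{9h_3^2}{4v_4}y+c_1(x)\notin K[x]$.
\end{itemize}
So the final step of Proposition~\ref{prop3.1} is unavailable: it compares $y$-coefficients using $h_1,\dots,h_t,u_x\in K[x]$ and $u_y\in K^*$. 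The paper instead needs new case distinctions, each ending in $h_3=0$: for $d=5$ it splits into $u_y\in K^*$, $u_y\in K[x]\setminus K$ and $\deg_y u_y\ge 1$; for $d=4$ it splits on $\deg_y u$. At best your plan covers the subcase $u_y\in K^*$ of $d=5$.

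Your treatment of $d<t$ is also wrong as written. Conjugating by the transposition of the last two coordinates gives $(u(x,z),h(x,z,y),v(x,z,y))$, whose first component depends on $z$. So neither Theorem~\ref{thm3.2} nor \cite{13} applies, and your claim that every $T$ you use preserves the shape $(u(x,y),\ast,\ast)$ fails for the permutation. Merely reordering to $(u,h,v)$ is left multiplication by a permutation, not a conjugation, and does not preserve nilpotency.

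The case is in fact vacuous. Equation \eqref{eq2.1} forces $d\ge t-1=2$. For $d=2$, the $z^4$-coefficient of \eqref{eq2.2} reads $-(v_{2y})^2=2v_2h_{3y}$ with $v_{2y}=-3h_3$, i.e.\ $3v_{2y}^2=2v_2v_{2yy}$. Comparing leading $y$-coefficients forces $\deg_y v_2=0$, hence $h_3=0$.

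Your $d=t=3$ reduction via $h\mapsto h-\mu v$ is fine, but it needs $h_3/v_3\in K$, which you did not check. It follows from $h_{3y}=0$ (the $z^5$-coefficient of \eqref{eq2.2}) and $v_3h_3'=h_3v_3'$ (the $z^5$-coefficient of \eqref{eq2.3}, using $u_y\neq 0$). The paper takes both reductions from Lemmas 3.1 and 3.2 of \cite{13}.
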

\begin{proof}
If $\deg_zv\geq 6$, then the conclusion follows from Theorem \ref{thm3.2}. It follows from Lemma 3.1 and Lemma 3.2 in \cite{13} that we can assume $\deg_zv>\deg_zh$.

If $\deg_zh\leq 2$, then the conclusion follows from Theorem 3.4\footnote{The condition $(m,n)=1$ in Theorem 3.4 of \cite{13} can be removed due to Lemma 2.1 in \cite{14}.} in \cite{13}. Thus, we only need to check the following two cases: $\deg_zh=3$ and $\deg_zv=4$ or 5. Then the conclusion follows from Proposition \ref{prop3.5} and Proposition \ref{prop3.6}.

\begin{lem}\label{lem3.4}
Let $H=(u(x,y),v(x,y,z),h(x,y,z))$ be a polynomial map of $K[x,y,z]$ with $H(0)= 0$. Suppose that  $v=\sum_{i=0}^dv_iz^i$, $h=\sum_{j=0}^3h_jz^j$ with $v_dh_3\neq 0$ and $v_i, h_j\in K[x,y]$. If $JH$ is nilpotent and $\deg_zh= 3$, $\deg_zv=4$ or 5, then $h_2,~h_3\in K[x]$ and $v_{2y}=-3h_3$, $v_{1y}=-2h_2$, $-u_x-v_{0y}=h_1$.
\end{lem}

Since $JH$ is nilpotent, we have
\begin{eqnarray}
-u_{x}-v_{y}=h_{z},\label{eq3.18}\\
-u_{x}v_{y}-v_{x}u_{y}-u_{x}^{2}-v_{y}^{2}=h_{y}v_{z},\label{eq3.19}\\
-u_{x}^{3}-2u_{x}u_{y}v_{x}-v_{x}v_{y}u_{y}=u_{y}v_{z}h_{x}.\label{eq3.20}
\end{eqnarray}
It follows from equation \eqref{eq3.18} that
\begin{equation}\label{eq3.21}
-u_{x}-(v_{dy}z^{d}+\cdots+v_{1y}z+v_{0y})=3h_{3}z^2+2h_{2}z+h_{1}.
\end{equation}
Then we have $v_3,v_{4},\ldots,v_d\in K[x]$ and
\begin{eqnarray}
v_{2y}=-3h_{3},\notag\\
	v_{1y}=-2h_{2},\label{eq3.22}\\
	-u_{x}-v_{0y}=h_{1}\notag
\end{eqnarray}
by comparing the coefficients of the degree of $z$ of equation \eqref{eq3.21}.
It follows from equation \eqref{eq3.19} that
\begin{equation}\label{eq3.23}
\begin{split}
-u_{x}\left(v_{2y}z^{2}+v_{1y}z+v_{0y}\right)-u_{y}\left( v_{dx}z^{d}+\cdots+v_{0x}\right) -u_{x}^{2}\\-\left( v_{2y}z^{2}+v_{1y}z+v_{0y}\right) ^{2}\\=\left( h_{3y}z^{3}+\cdots+h_{0y}\right) \left( dv_{d}z^{d-1}+\cdots+v_{1}\right).
\end{split}
\end{equation}
Then we have
\begin{equation}\label{eq3.24}
h_3,~h_2\in K[x]
\end{equation}
by comparing the coefficients of $z^{d+2}$ and $z^{d+1}$ of equation \eqref{eq3.23}, respectively. Then the conclusion follows.

\begin{prop}\label{prop3.5}
Let $H=(u(x,y),v(x,y,z),h(x,y,z))$ be a polynomial map of $K[x,y,z]$ with $H(0)= 0$. Suppose that  $v=\sum_{i=0}^dv_iz^i$, $h=\sum_{j=0}^3h_jz^j$ with $v_dh_3\neq 0$ and $v_i, h_j\in K[x,y]$. If $JH$ is nilpotent and $d=5$ , then $u,~v,~h$ are linearly dependent over $K$.
\end{prop}

Suppose that $u,~v,~h$ are linearly independent over $K$. Then it follows from Proposition 2.2 in \cite{13} that we have $u_y\neq 0$. Comparing the coefficients of $z^5$ of equation \eqref{eq3.23}, we have
$$h_{1y}=-\frac{v_5^{'}}{5v_5}u_y.$$
Then we have
\begin{equation}\label{eq3.25}
h_1=-\frac{v_5^{'}}{5v_5}u+c_1(x),~c_1(x)\in K[x]
\end{equation}
by integrating the two sides with respect to $y$ of the above equation. Comparing the coefficients of $z^4$ of equation \eqref{eq3.23}, we have
$$h_{0y}=\left(\frac{4v_4v_5^{'}}{(5v_5)^2}-\frac{v_4^{'}}{5v_5}\right)u_y-\frac{9h_3^2}{5v_5}.$$
Then we have
\begin{equation}\label{eq3.26}
h_0=\left(\frac{4v_4v_5^{'}}{(5v_5)^2}-\frac{v_4^{'}}{5v_5}\right)u-\frac{9h_3^2}{5v_5}y+c_2(x),~c_2(x)\in K[x]
\end{equation}
by integrating the two sides with respect to $y$ of the above equation. It follows from equations \eqref{eq3.20}, \eqref{eq3.22} that
\begin{equation}\label{eq3.27}
\begin{split}
-u_{x}^{3}-2u_{x}u_{y}\left(v_{5x}z^{5}+\cdots+v_{0x}\right)-u_{y}\left(v_{5x}z^{5}+\cdots+v_{0x}\right) \left( v_{2y}z^{2}+v_{1y}z+v_{0y}\right)\\
	=u_{y}\left( h_{3x}z^{3}+\cdots+h_{0x}\right) \left( 5v_{5}z^{4}+\cdots+v_{1}\right).
\end{split}
\end{equation}
Comparing the coefficients of the highest degree of $z$ of equation \eqref{eq3.27}, we have
$$5v_5h_{3x}=3h_3v_{5x}.$$
Then we have
\begin{equation}\label{eq3.28}
v_5^3=\lambda h_3^5,~\lambda\in K^*
\end{equation}
by integrating the two sides with respect to $x$ of the above equation. Comparing the coefficients of $z^5$ of equation \eqref{eq3.27} and substituting equation \eqref{eq3.22} to it, we have
$$(h_1-u_x)v_5^{'}+2h_2v_4^{'}+3h_3v_3^{'}=5v_5h_{1x}+4v_4h_{2x}+3v_3h_{3x}.$$
Substituting equations \eqref{eq3.22}, \eqref{eq3.25} to the above equation, we have $v_5\in K^*$. It follows from equation \eqref{eq3.28} that $h_3\in K^*$. Thus, we have
\begin{eqnarray}
h_1=c_1(x),\label{eq3.29}\\
h_0=-\frac{v_4^{'}}{5v_5}u-\frac{9h_3^2}{5v_5}y+c_2(x).\label{eq3.30}
\end{eqnarray}
If $u_y=0$, then it follows from Proposition 2.1 in \cite{13} that $u,~v,~h$ are linearly dependent over $K$. Thus, we have $u_y\neq 0$. Since $v_d\in K^*$, $h_t\in K^*$, we have
\begin{equation}\label{eq3.31}
h_{2}^{'}=\frac{3h_{3}v_{4}^{'}}{5v_{5}}
\end{equation}
by comparing the coefficients of $z^6$ of equation \eqref{eq3.27}. Comparing the coefficients of $z^4$ of equation \eqref{eq3.27} and substituting equations \eqref{eq3.29}, \eqref{eq3.30} to it, we have
\begin{equation}\nonumber
\begin{split}
-v_{4}^{''}u+5v_{5}c_{2}^{'}(x)+4v_{4}c_{1}^{'}(x)+3v_{3}h_{2}^{'}=c_{1}(x)v_{4x}+2h_{2}v_{3x}+3h_{3}v_{2x}.
\end{split}
\end{equation}
Then we have $v_{4}^{''}=0$ by comparing the coefficients of $y$ of the above equation. That is, $v_4^{'}\in K$. Then the above equation has the following form:
\begin{equation}\label{eq3.32}
5v_{5}c_{2}^{'}(x)+4v_{4}c_{1}^{'}(x)+3v_{3}h_{2}^{'}=c_{1}(x)v_{4x}+2h_{2}v_{3x}+3h_{3}v_{2x}.
\end{equation}
Comparing the coefficients of $z^3$ of equation \eqref{eq3.23} and substituting equation \eqref{eq3.24} to it, we have
$$4v_{4}h_{0y}+3v_{3}h_{1y}=-v_{3x}u_{y}-2v_{2y}v_{1y}.$$
Substituting equations \eqref{eq3.22}, \eqref{eq3.29}, \eqref{eq3.30} to the above equation, we have
\begin{equation}\label{eq3.33}
\left(v_{3x}-\frac{4v_{4}v_{4}^{'}}{5v_{5}} \right) u_{y}=\frac{4v_{4}(3h_{3})^{2}}{5v_{5}}-12h_{3}h_{2}.
\end{equation}
Let $p(x)=\frac{4v_{4}(3h_{3})^{2}}{5v_{5}}-12h_{3}h_{2}$. It follows from equation \eqref{eq3.31} that $ p^{'}(x)=0$. That is, $p(x)\in K$.\\

$(1)$ If $u_y\in K^*$, then it follows from  equation \eqref{eq3.33} that
\begin{equation}\label{eq3.34}
v_{3x}=\frac{4v_4v_4^{'}}{5v_5}+m_1(x)
\end{equation}
with $m_1(x)\in K[x]$. Comparing the coefficients of $z^2$ of equation \eqref{eq3.23}, we have
$$3v_3h_{0y}=-u_xv_{2y}-u_yv_{2x}-2v_{2y}v_{0y}-v_{1y}^2.$$
Substituting equations \eqref{eq3.22}, \eqref{eq3.30} to the above equation, we have
\begin{equation}\label{eq3.35}
v_{2x}=\frac{3v_3v_4^{'}}{5v_5}+m_2(x)
\end{equation}
with $m_2(x)\in K[x]$. Then we have
\begin{equation}\label{eq3.36}
v_{1x}=\frac{2v_2v_4^{'}}{5v_5}+\frac{2v_2(3h_3)^2}{5v_5u_y}+m_3(x)
\end{equation}
by comparing the coefficients of $z$ of equation \eqref{eq3.23} and substituting equations \eqref{eq3.22}, \eqref{eq3.30} to it, where $m_3(x)\in K[x]$. Comparing the coefficients of $z^5$ of equation \eqref{eq3.27}, we have
$$5v_{5}h_{1}^{'}+4v_{4}h_{2}^{'}=2h_{2}v_{4}^{'}+3h_{3}v_{3}^{'}.$$
Then we have
\begin{equation}\label{eq3.37}
h_{1}^{'}=\frac{2h_{2}v_{4}^{'}}{5v_{5}}+q_{1}(x),\;q_{1}(x)=\frac{3h_{3}m_{1}(x)}{5v_{5}}
\end{equation}
by substituting equations \eqref{eq3.31}, \eqref{eq3.34} to the above equation. It follows from equations \eqref{eq3.32}, \eqref{eq3.34}, \eqref{eq3.35}, \eqref{eq3.37} that
\begin{equation}\label{eq3.38}
c_{2}^{'}(x)=\frac{h_1v_{4}^{'}}{5v_{5}}+q_{2}(x),
\end{equation}
where $q_{2}(x)=-\frac{4v_4}{5v_5}q_1(x)+\frac{2h_{2}m_{1}(x)+3h_3m_2(x)}{5v_{5}}$. Comparing the coefficients of $z^3$ of equation \eqref{eq3.27}, we have
\begin{equation}\label{eq3.39}
4v_{4}h_{0x}+3v_{3}h_{1}^{'}+2v_{2}h_{2}^{'}=(h_{1}-u_{x})v_{3x}+2h_{2}v_{2x}+3h_{3}v_{1x}.
\end{equation}
Substituting equations \eqref{eq3.30}, \eqref{eq3.31}, \eqref{eq3.37}, \eqref{eq3.38}, \eqref{eq3.34}, \eqref{eq3.35}, \eqref{eq3.36} to the above equation, we have
$$4v_4q_2(x)+3v_3q_1(x)=(h_1-u_x)m_1(x)+2h_2m_2(x)+3h_3m_3(x)+\frac{2v_2(3h_3)^3}{5v_5u_y}.$$
Since $h_1,~h_2,~h_3,~u_x \in K[x]$, $v_3,~v_4\in K[x]$, we have $2\frac{(3h_3)^4}{5v_5u_y}=0$ by differentiating both sides of the above equation with respect to $y$. Hence we have $h_t=0$, which is a contradiction. \\

$(2)$ If $u_y\in K[x]\backslash K$, then we have $u=l_1(x)y+l_0(x)$ with $l_0(x)\in K[x]$, $l_1(x)\in K[x]\backslash K$. Then we have
\begin{equation}\label{eq3.40}
3v_{3}h_{0y}=-v_{2x}u_{y}-v_{2y}u_{x}-2v_{2y}v_{0y}-v_{1y}^2
\end{equation}
by comparing the coefficients of $z^2$ of equation \eqref{eq3.23}. It follows from equation \eqref{eq3.22} that
\begin{eqnarray}
v_2=-3h_3y+k_1(x),~k_1(x)\in K[x],\label{eq3.41}\\
v_1=-2h_2y+k_2(x),~k_2(x)\in K[x].\label{eq3.42}
\end{eqnarray}
Substituting equations \eqref{eq3.22}, \eqref{eq3.30}, \eqref{eq3.41}, \eqref{eq3.42} to equation \eqref{eq3.40}, we have
\begin{equation}\label{eq3.43}
u_{x}=-\frac{1}{3h_{3}}\left( k_{1}^{'}(x)-\frac{3v_{3}v_{4}^{'}}{5v_{5}}\right) u_{y}+\frac{3v_{3}3h_{3}}{5v_{5}}-\frac{4h_2^2}{3h_{3}}-2h_{1}
\end{equation}
Then we have $l_{1}^{'}(x)=0$ by comparing the coefficients of $y$ of equation \eqref{eq3.43}. That is, $l_1(x)\in K$, which contradicts with the condition $l_1(x)\in K[x]\backslash K$.\\

$(3)$ If $\deg_yu_y\geq 1$, then it follows from equation \eqref{eq3.33} that
\begin{eqnarray}\label{eq3.44}
v_{3x}=\frac{4v_4v_4^{'}}{5v_5}\\\nonumber
h_2=\frac{3v_4h_3}{5v_5}
\end{eqnarray}
Comparing the coefficients of $z^2$ of equation \eqref{eq3.23}, we have
$$3v_{3}h_{0y}=-v_{2x}u_{y}-v_{2y}u_x-2v_{2y}v_{0y}-v_{1y}^2.$$
Substituting equations \eqref{eq3.23}, \eqref{eq3.30}, \eqref{eq3.41} to the above equation, we have
\begin{equation}\label{eq3.45}
v_{0y}=\frac{1}{3h_{3}}\left( k_{1}^{'}(x)-\frac{3v_{3}v_{4}^{'}}{5v_{5}}\right) u_{y}-\frac{3v_{3}3h_{3}}{5v_{5}}+\frac{4h_2^2}{3h_{3}}+h_1.
\end{equation}
Comparing the coefficients of $z^5$ of equation \eqref{eq3.27} and substituting equation \eqref{eq3.22} to it, we have
$$2h_2v_4^{'}+3h_3v_3^{'}=5v_5h_1^{'}+4v_4h_2^{'}.$$
Substituting equations \eqref{eq3.31}, \eqref{eq3.44} to the above equation, we have
\begin{equation}\label{eq3.46}
h_1^{'}=\frac{2h_2v_4^{'}}{5v_5}.
\end{equation}
It follows from equation \eqref{eq3.32} that
$$5v_5c_2^{'}(x)+4v_4h_1^{'}+3v_3h_2^{'}=h_1v_4^{'}+2h_2v_3^{'}+3h_3v_2^{'}.$$
Substituting equations \eqref{eq3.31}, \eqref{eq3.34}, \eqref{eq3.41}, \eqref{eq3.46} to the above equation, we have
\begin{equation}\label{eq3.47}
c_2^{'}(x)=\frac{h_1v_4^{'}}{5v_5}-\frac{9v_3h_3v_4^{'}}{(5v_5)^2}+\frac{3h_3k_1^{'}(x)}{5v_5}.
\end{equation}
Comparing the coefficients of $z^3$ of equation \eqref{eq3.27} and substituting equation \eqref{eq3.22} to it, we have
$$4v_{4}h_{0x}+3v_{3}h_{1}^{'}+2v_{2}h_{2}^{'}=(h_{1}-u_{x})v_{3x}+2h_{2}v_{2x}+3h_{3}v_{1x}.$$
Substituting equations \eqref{eq3.31}, \eqref{eq3.41}, \eqref{eq3.42}, \eqref{eq3.44}, \eqref{eq3.46}, \eqref{eq3.47} to the above equation, we have
\begin{equation}\label{eq3.48}
k_{2}^{'}(x)=-\frac{6v_{4}v_{3}v_{4}^{'}}{(5v_5)^{2}}+\frac{2v_{4}^{'} k_{1}(x)}{5v_{5}}+\frac{2v_{4}k_{1}^{'}(x)}{5v_{5}}.
\end{equation}
Then we have
\begin{equation}\nonumber
2v_{2}h_{0y}=-u_{y}v_{1x}-2h_{2}(h_{1}-v_{0y})
\end{equation}
by comparing the coefficients of $z$ of equation \eqref{eq3.23} and substituting equation \eqref{eq3.22} to it. Substituting equations \eqref{eq3.30}, \eqref{eq3.41}, \eqref{eq3.42}, \eqref{eq3.44}, \eqref{eq3.45}, \eqref{eq3.48} to the above equation, we have
$$\frac{2(3h_3)^3}{5v_5}y-\frac{2(3h_3)^2}{5v_5}k_1(x)+\frac{18h_2h_3v_3}{5v_5}-\frac{(2h_2)^3}{3h_3}=0.$$
Then we have $h_3=0$ by comparing the coefficients of $y$ of the above equation, which is a contradiction. Thus, $u,~v,~h$ are linearly dependent over $K$.

\begin{prop}\label{prop3.6}
Let $H=(u(x,y),v(x,y,z),h(x,y,z))$ be a polynomial map of $K[x,y,z]$ with $H(0)= 0$. Suppose that  $v=\sum_{i=0}^dv_iz^i$, $h=\sum_{j=0}^3h_jz^j$ with $v_dh_3\neq 0$ and $v_i, h_j\in K[x,y]$. If $JH$ is nilpotent and $d=4$ , then $u,~v,~h$ are linearly dependent over $K$.
\end{prop}

Suppose that $u,~v,~h$ are linearly independent over $K$. Then it follows from Proposition 2.2 in \cite{13} that we have $u_y\neq 0$.
Thus, we have
$$4v_4h_{1y}=-v_{4x}u_y-v_{2y}^2$$
by comparing the coefficients of $z^4$ of equation \eqref{eq3.23}. Substituting equation \eqref{eq3.22} to the above equation, we have
$$h_{1y}=-\frac{v_4^{'}}{4v_4}u_y-\frac{(3h_3)^2}{4v_4}.$$
Then we have
\begin{equation}\label{eq3.49}
h_1=-\frac{v_4^{'}}{4v_4}u-\frac{(3h_3)^2}{4v_4}y+c_1(x),~c_1(x)\in K[x]
\end{equation}
by integrating the two sides of the above equation with respect to $y$. Comparing the coefficients of $z^3$ of equation \eqref{eq3.23}, we have
$$4v_4h_{0y}+3v_3h_{1y}=-v_{3x}u_y-2v_{2y}v_{1y}.$$
Substituting equations \eqref{eq3.22}, \eqref{eq3.49} to the above equation, we have
$$h_{0y}=\left( \frac{3v_3v_{4}^{'}}{(4v_{4})^{2}}-\frac{v_3^{'}}{4v_{4}}\right)u_{y}+\frac{3v_3(3h_{3})^{2}}{(4v_{4})^{2}}-\frac{12h_{3}h_{2}}{4v_{4}} .$$
Then we have
\begin{equation}\label{eq3.50}
h_{0}=\left( \frac{3v_3v_{4}^{'}}{(4v_{4})^{2}}-\frac{v_3^{'}}{4v_{4}}\right)u+\left(\frac{3v_3(3h_{3})^{2}}{(4v_{4})^{2}}-\frac{12h_{3}h_{2}}{4v_{4}}\right)y +c_2(x),~c_2(x)\in K[x]
\end{equation}
by integrating the two sides of the above equation with respect to $y$. It follows from equation \eqref{eq3.20} that
\begin{equation}\label{eq3.51}
\begin{split}
-u_{x}^{3}-2u_{x}u_{y}\left(v_{4x}z^{4}+\cdots+v_{0x}\right)-u_{y}\left(v_{4x}z^{4}+\cdots+v_{0x}\right) \left( v_{2y}z^{2}+v_{1y}z+v_{0y}\right)\\=u_{y}\left( h_{3x}z^{3}+\cdots+h_{0x}\right) \left( 4v_{4}z^{3}+\cdots+v_{1}\right).
\end{split}
\end{equation}
Then we have
$$3h_3v_{4x}=4v_4h_{3x}$$
by comparing the coefficients of $z^6$ of equation (3.51). Then we have
\begin{equation}\label{eq3.52}
v_4^3=\lambda h_3^4,~\lambda\in K^*
\end{equation}
by integrating the two sides of the above equation with respect to $x$. It follows from equations \eqref{eq3.22}, \eqref{eq3.24} that
\begin{eqnarray}\label{eq3.53}
v_2=-3h_3y+k_2(x)\\\nonumber
v_{1}=-2h_{2}y+k_{1}(x).
\end{eqnarray}
Then we have
$$4v_4h_{1x}+3v_3h_{2x}+2v_2h_{3x}=(h_1-u_x)v_{4x}+2h_2v_{3x}+3h_3v_{2x}$$
by comparing the coefficients of $z^4$ of equation (3.51) and substituting equation \eqref{eq3.22} to it. Substituting equations \eqref{eq3.49}, \eqref{eq3.52}, \eqref{eq3.53} to the above equation, we have
$$\frac{5(v_{4}^{'})^2-4v_{4}v_{4}^{''}}{4v_{4}}u+4v_{4}c_{1}^{'}(x)+3v_3h_{2}^{'}+2k_{2}(x)h_{3}^{'}=c_{1}(x)v_{4}^{'}+2h_{2}v_{3}^{'}+3h_{3}k_{2}^{'}(x).$$
Since $u_y\neq 0$, we have $5(v_{4}^{'})^2=4v_{4}v_{4}^{''}$ by comparing the coefficients of the degree of $y$ of the above equation. Thus, we have $v_4\in K^*$. It follows from equation \eqref{eq3.52} that $h_3\in K^*$. Then the above equation has the following form:
\begin{equation}\label{eq3.54}
4v_{4}c_{1}^{'}(x)+3v_3h_{2}^{'}=2h_{2}v_{3}^{'}+3h_{3}k_{2}^{'}(x).
\end{equation}
Since $v_4,~h_3\in K^*$, then equations \eqref{eq3.49}, \eqref{eq3.50} have the following form:
\begin{eqnarray}
h_1=-\frac{(3h_3)^2}{4v_4}y+c_1(x),\label{eq3.55}\\
h_{0}=-\frac{v_3^{'}}{4v_{4}}u+\left(\frac{3v_3(3h_{3})^{2}}{(4v_{4})^{2}}-\frac{12h_{3}h_{2}}{4v_{4}}\right)y +c_2(x).\label{eq3.56}
\end{eqnarray}
Comparing the coefficients of $z^5$ of equation (3.51), we have
\begin{equation}\label{eq3.57}
h_{2}^{'}=\frac{3h_{3}v_{3}^{'}}{4v_{4}}.
\end{equation}
Then we have
$$4v_4h_{0x}+3v_3h_{1x}+2v_2h_{2x}=(h_1-u_x)v_{3x}+2h_2v_{2x}+3h_3v_{1x}$$
by comparing the coefficients of $z^3$ of equation (3.51) and substituting equation \eqref{eq3.22} to it. Substituting equations \eqref{eq3.53}, \eqref{eq3.55}, \eqref{eq3.56} to the above equation, we have
$$-v_{3}^{''}u+4v_{4}c_{2}^{'}(x)+3v_3c_{1}^{'}(x)+2k_{2}(x)h_{2}^{'}=c_{1}(x)v_{3}^{'}+2h_{2}k_{2}^{'}(x)+3h_{3}k_{1}^{'}(x).$$
Then we have $v_{3}^{''}=0$ by comparing the coefficients of the degree of $y$. That is, $v_{3}^{'}\in K$. Then the above equation has the following form:
\begin{equation}\label{eq3.58}
4v_{4}c_{2}^{'}(x)+3v_3c_{1}^{'}(x)+2k_{2}(x)h_{2}^{'}=c_{1}(x)v_{3}^{'}+2h_{2}k_{2}^{'}(x)+3h_{3}k_{1}^{'}(x).
\end{equation}
Substituting equation \eqref{eq3.57} to equation \eqref{eq3.54}, we have
\begin{equation}\label{eq3.59}
c_{1}^{'}(x)=\frac{2h_{2}v_{3}^{'}}{4v_{4}}-\frac{3v_{3}3h_{3}v_{3}^{'}}{(4v_{4})^{2}}+\frac{3h_{3}k_{2}^{'}(x)}{4v_{4}}.
\end{equation}
Substituting equations \eqref{eq3.57}, \eqref{eq3.59} to equation \eqref{eq3.58}, we have
\begin{equation}\label{eq3.60}
\begin{split}
c_{2}^{'}(x)=\frac{c_{1}(x)v_{3}^{'}}{4v_{4}}+\frac{(3v_{3})^23h_{3}v_{3}^{'}}{(4v_{4})^{3}}-\frac{3v_{3}3h_{3}k_{2}^{'}(x)}{(4v_{4})^{2}}+\frac{3h_{3}k_{1}^{'}(x)}{4v_{4}}\\
-\frac{3v_{3}2h_{2}v_{3}^{'}}{(4v_{4})^{2}}-\frac{2k_{2}(x)3h_{3}v_{3}^{'}}{(4v_{4})^{2}}+\frac{2h_{2}k_{2}^{'}(x)}{4v_{4}}.
\end{split}
\end{equation}
Comparing the coefficients of $z^2$ of equation \eqref{eq3.23}, we have
$$3v_3h_{0y}+2v_2h_{1y}=-v_{2x}u_y-v_{1y}^2+v_{2y}(-u_x-2v_{0y}).$$
Substituting equations \eqref{eq3.22}, \eqref{eq3.53}, \eqref{eq3.55}, \eqref{eq3.56} to the above equation, we have
$$v_{0y}=\frac{\theta_1(x)}{3h_3}u_y+\frac{(3h_3)^2}{4v_4}y+\pi_1(x).$$
Then we have
\begin{equation}\label{eq3.61}
v_{0}=\frac{\theta_1(x)}{3h_3}u+\frac{(3h_3)^2}{8v_4}y^2+\pi_1(x)y+k_0(x)
\end{equation}
by integrating the two sides of the above equation with respect to $y$,
where $\theta_1(x)=k_2^{'}(x)-\frac{3v_3v_3^{'}}{4v_4}$, $\pi_1(x)=3v_3\left( \frac{9v_3h_{3}}{(4v_{4})^{2}}-\frac{4h_{2}}{4v_{4}}\right)-\frac{6h_{3}k_{2}(x)}{4v_{4}}+c_{1}(x)+\frac{4h_2^2}{3h_{3}}$.
Substituting equations \eqref{eq3.55}, \eqref{eq3.61} to the third equation of equation\eqref{eq3.22}, we have
\begin{equation}\label{eq3.62}
u_x=-\frac{\theta_{1}(x)}{3h_{3}}u_{y}+\delta_{1}(x),
\end{equation}
where $\delta_1(x)=-c_1(x)-\pi_1(x)$. Then we have
$$3v_3h_{0x}+2v_2h_{1x}+v_1h_{2x}=(h_1-u_x)v_{2x}+2h_2v_{1x}+3h_3v_{0x}$$
by comparing the coefficients of $z^2$ of equation (3.51) and substituting equation \eqref{eq3.22} to it. Substituting equations \eqref{eq3.53}, \eqref{eq3.55}, \eqref{eq3.56}, \eqref{eq3.61}, \eqref{eq3.62} to the above equation, we have
\begin{equation}\label{eq3.63}
\begin{split}
\theta_{1}^{'}(x)u+\phi(x)y+c_{1}(x)k_{2}^{'}(x)+2h_{2}k_{1}^{'}(x)+3h_{3}k_{0}^{'}(x)\\
=3v_{3}c_{2}^{'}(x)+2k_{2}(x)c_{1}^{'}(x)+k_{1}(x)h_{2}^{'},
\end{split}
\end{equation}
where $\phi(x)=-\frac{(3h_3)^2}{4v_4}k_2^{'}(x)-2h_2h_2^{'}+3h_3\pi_1^{'}(x)-\frac{9v_3v_3^{'}(3h_3)^2}{(4v_4)^2}+\frac{36v_3h_3h_2^{'}}{4v_4}+6h_3c_1^{'}(x)$.
Substituting equations \eqref{eq3.57}, \eqref{eq3.59} and the equation of $\pi_1(x)$ to the equation $\phi(x)$, we have $\phi(x)=0$. Thus, we have $\theta_{1}^{'}(x)=0$ by comparing the coefficients of the degree of $y$ of equation \eqref{eq3.63}. Then equation \eqref{eq3.63} has the following form:
\begin{equation}\label{eq3.64}
\begin{split}
c_{1}(x)k_{2}^{'}(x)+2h_{2}k_{1}^{'}(x)+3h_{3}k_{0}^{'}(x)
=3v_{3}c_{2}^{'}(x)+2k_{2}(x)c_{1}^{'}(x)+k_{1}(x)h_{2}^{'}.
\end{split}
\end{equation}
Comparing the coefficients of $z^0$ of equation \eqref{eq3.23} and substituting equation \eqref{eq3.22} to it, we have
$$v_1h_{0y}=h_1v_{0y}-u_yv_{0x}-u_x^2.$$
Substituting equations \eqref{eq3.53}, \eqref{eq3.55}, \eqref{eq3.56}, \eqref{eq3.61}, \eqref{eq3.62} to the above equation, we have
\begin{equation}\label{eq3.65}
\begin{split}
\left[\frac{2h_2v_3^{'}}{4v_4}+\frac{3h_3\theta_1}{4v_4}+\pi_1^{'}(x)\right]yu_y+\left[-\frac{v_3^{'}k_1(x)}{4v_4}-\frac{\theta_1c_1(x)}{3h_3}+k_0^{'}(x)-\frac{\theta_1\delta_1(x)}{3h_3}\right]u_y\\
+\frac{(3h_3)^2}{(4v_4)^2}y^2+\eta(x)y+\frac{27v_3h_3^2}{(4v_4)^2}k_1(x)-\frac{12h_3h_2}{4v_4}k_1(x)-c_1(x)\pi_1(x)+\delta_1^2(x)=0,
\end{split}
\end{equation}
where $\eta(x)=-\frac{54h_2v_3h_3^2}{(4v_4)^2}+\frac{24h_2^2h_3}{4v_4}+\frac{(3h_3)^2\pi_1(x)}{4v_4}-\frac{(3h_3)^2c_1(x)}{4v_4}$.

$(i)$ If $\deg_yu\geq 4$ or $\deg_yu=1$, then we have $\frac{(3h_3)^2}{(4v_4)^2}=0$ by comparing the coefficients of the degree of $y$ of equation \eqref{eq3.65}. Then we have $h_3=0$, which is a contradiction.

$(ii)$ If $\deg_yu=2$, then we have $\frac{2h_2v_3^{'}}{4v_4}+\frac{3h_3\theta_1}{4v_4}+\pi_1^{'}(x)=0$ by substituting the equations of $\theta_1(x)$ and $\pi_1(x)$ to the coefficients of $yu_y$ of equation \eqref{eq3.65}. Then we have $\frac{(3h_3)^2}{(4v_4)^2}=0$ by comparing the coefficients of $y^2$ of equation \eqref{eq3.65}. Then we have $h_3=0$, which is a contradiction.

$(iii)$  If $\deg_yu=3$, then we have
$$2v_2h_{0y}+v_1h_{1y}=-v_{1x}u_y-v_{1y}u_x-2v_{1y}v_{0y}$$
by comparing the coefficients of $z$ of equation \eqref{eq3.23}. Substituting equations \eqref{eq3.22}, \eqref{eq3.55}, \eqref{eq3.61}, \eqref{eq3.53}, \eqref{eq3.56}, \eqref{eq3.57} to the above equation, we have
\begin{equation}\nonumber
\begin{split}
\left[k_1^{'}(x)-\frac{2k_2(x)v_3^{'}}{4v_4}-\frac{2h_2\theta_1}{3h_3}\right]u_y+\left(\frac{54h_3^2h_2}{4v_4}-\frac{6v_3(3h_3)^3}{(4v_4)^2}\right)y\\
+2\left(\frac{27v_3h_3^2}{(4v_4)^2}-\frac{12h_3h_2}{4v_4}\right)k_2(x)-\frac{9h_3^2}{4v_4}k_1(x)+2h_2c_1(x)-2h_2\pi_1(x)=0.
\end{split}
\end{equation}
Then we have
\begin{equation}\label{eq3.66}
k_1^{'}(x)=\frac{2k_2(x)v_3^{'}}{4v_4}+\frac{2h_2\theta_1}{3h_3}
\end{equation}
by comparing the highest degree of $y$ of the above equation. Substituting equations \eqref{eq3.59}, \eqref{eq3.60}, \eqref{eq3.66} to equation \eqref{eq3.64}, we have
\begin{equation}\label{eq3.67}
\begin{split}
k_{0}^{'}(x)=\frac{3v_{3}c_{1}(x)v_{3}^{'}}{12v_{4}h_{3}}-\frac{c_{1}(x)k_{2}^{'}(x)}{3h_{3}}+\frac{2k_{2}(x)k_{2}^{'}(x)}{4v_{4}}-\frac{6k_{2}(x)v_{3}v_{3}^{'}}{(4v_{4})^{2}}
-\frac{4\theta_{1}h_2^2}{(3h_{3})^{2}}\\+\frac{k_{1}(x)v_{3}^{'}}{4v_{4}}+\frac{(3v_{3})^3v_{3}^{'}}{(4v_{4})^{3}}
-\frac{(3v_{3})^2k_{2}^{'}(x)}{(4v_{4})^{2}}-\frac{12h_{2}v_{3}^{2}v_{3}^{'}}{(4v_{4})^{2}h_{3}}+\frac{v_{3}h_{2}k_{2}^{'}(x)}{v_{4}h_{3}}.
\end{split}
\end{equation}
Substituting equation \eqref{eq3.67} and the equations of $\theta_1$, $\delta_1$ to the coefficients of $u_y$ of equation \eqref{eq3.65}, we have $-\frac{v_3^{'}k_1(x)}{4v_4}-\frac{\theta_1c_1(x)}{3h_3}+k_0^{'}(x)-\frac{\theta_1\delta_1(x)}{3h_3}=0$.
Then we have $\frac{(3h_3)^2}{(4v_4)^2}=0$ by comparing the coefficients of the degree of $y$ of equation \eqref{eq3.65}. Thus, we have $h_3=0$, which is a contradiction. Therefore, $u,~v,~h$ are linearly dependent over $K$.
\end{proof}

\section{Problems for further research}

In the paper, we study the polynomial maps of the form: $H=(u(x,y),v(x,y,z),\allowbreak h(x,y,z))$. Next we want to study the case $\deg_zh+1\leq\deg_zv\leq 2\deg_zh-1$, and classify all polynomial maps in dimension three with nilpotent Jacobians. Thus, we can ask the following problem.

\begin{prob}\label{prob4.1}
Let $H=(u(x,y,z),v(x,y,z),h(x,y,z))$ be a polynomial map of $K[x,y,z]$ with $H(0)= 0$. Suppose that $u,~v,~h$ are linearly independent over $K$. If $JH$ is nilpotent, whether there exist $T\in \operatorname{GL}_3(K)$ such that $THT^{-1}$ has the form of Theorem \ref{thm2.3}?
\end{prob}

\end{document}